\documentclass[12pt]{article}

\usepackage{graphicx}
\usepackage{latexsym}
\usepackage{array,fancybox}
\usepackage{amsmath,amssymb}
\usepackage{amsthm}
\usepackage{hyperref}
\usepackage[autostyle]{csquotes}

\numberwithin{equation}{section}
\topmargin=0cm
\oddsidemargin=0cm
\textwidth=16cm
\textheight=23cm
\topmargin=-1cm

 \def\R{{\ifmmode{\rm I}\mkern-4mu{\rm R}
    \else\leavevmode\hbox{I}\kern-.17em\hbox{R}\fi}}
\def\N{{\ifmmode{\rm I}\mkern-3.5mu{\rm N}
    \else\leavevmode\hbox{I}\kern-.16em \hbox{N}\fi} }
\def\C{\ifmmode{{\rm C}\mkern-15mu{\phantom{\rm t}\vrule}}\mkern10mu
    \else\leavevmode\hbox{C}\kern-.5em\hbox{I}\kern.3em\fi}
\def\Ch{\ifmmode{\hat{\rm C}\mkern-15mu{\phantom{\rm t}\vrule}}\mkern10mu
    \else\leavevmode\hbox{C}\kern-.5em\hbox{I}\kern.3em\fi}
\def\Q{\ifmmode{{\rm Q}\mkern-16mu{\phantom{\rm t}\vrule}}\mkern10mu
    \else\leavevmode\hbox{Q}\kern-.57em\hbox{I}\kern.3em\fi}
\def\F{{\ifmmode{\rm I}\mkern-3.5mu{\rm F}
    \else\leavevmode\hbox{I}\kern-.16em \hbox{F}\fi} }

\newtheorem{theo}{Theorem}[section]
\newtheorem{prop}{Proposition}[section]
\newtheorem{lemm}{Lemma}[section]
\theoremstyle{definition}
\newtheorem{defi}{Definition}[section]

\begin{document}
  
                   \begin{center}
   {\large \bf Large-time behavior of solutions to a thermo-diffusion system with Smoluchowski interactions
 }
\\[1cm]
                   \end{center}
 
                   \begin{center}
                 {\sc Toyohiko Aiki}\\
 Department of Mathematical and Physical Sciences, Faculty of Science, \\
 Japan Women's University\\
 2-8-1 Mejirodai, Bunkyo-ku, Tokyo, 112-8681 Japan\\
 (aikit@fc.jwu.ac.jp)
                   \vspace{0.2cm}
                   
                {\sc Adrian Muntean}\\
 Department of Mathematics and Computer Science, Karlstad University \\
SE-651 88 Karlstad, Sweden \\
(adrian.muntean@kau.se)

       \end{center}
       \vspace{1cm}
        
\hspace*{-0.6cm}{\bf Abstract.} 
We prove the large time behavior of solutions to a coupled thermo-diffusion arising in the modelling of the motion of hot colloidal particles in porous media. Additionally, we also ensure the uniqueness of solutions of the target problem. The main mathematical difficulty is due to the presence in the right-hand side of the equations of products between temperature and concentration gradients. Such terms mimic the so-called thermodynamic Soret and Dufour effects. These are cross-coupling terms  emphasizing in this context a strong interplay between heat conduction and molecular diffusion.

\hspace*{-0.6cm}{\bf Keywords.} 
Thermo-diffusion; gradient estimates; large-time behavior; Sorret and Dufour effects
 
 \hspace*{-0.6cm}{\bf MSC 2010:} 35Q79; 35K55; 35B45; 35B40 
\section{Introduction} 

Populations of colloids can be driven into motion by gradients in chemical, electrostatic, or thermal fields that may exist externally to the colloids; see  \cite{Ramin,Herz}, e.g.  This paper is concerned with the mathematical analysis of a scenario involving the joint effect of gradients in chemical and thermal fields that we refer to here as thermo-diffusion. Particularly, we study the large-time behavior of the following class of thermo-diffusion systems -- a nonlinear coupled system of partial differential equations with homogeneous Neumann boundary conditions described as follows:

Let $\Omega \subset {\mathbb R}^3$ with smooth boundary $\Gamma := \partial \Omega$, and $Q(T) =   (0,T) \times \Omega$ and $S(T) := (0,T) \times \Gamma$ for $T > 0$. 
The problem is to find a pair of functions $(\theta,u)$, with  $u = (u_1, u_2, \cdots, u_N)$, satisfying 
\begin{align}
	& \theta_t - \kappa \Delta \theta - \tau \sum_{i=1}^N \nabla^{\delta_0} u_i \cdot \nabla \theta =0 
                   \mbox{ in } Q(T),  \label{eq1} \\
	& u_{it} - \kappa_i \Delta u_{i} - \tau_i \nabla \theta \cdot \nabla u_i =  R_i(u)  \mbox{ in } Q(T) 
       \mbox{ for each } i, \label{eq2} \\
	& - \kappa \nabla\theta \cdot \nu =0, - \kappa_i \nabla u_i \cdot \nu =0 \mbox{ for each } i 
 \mbox{ on } S(T),  \label{bc} \\
	& \theta(0, x) = \theta_0 (x), u_i(0,x) = u_{0i}(x)  \mbox{ for each } i  \mbox{ on } \Omega, \label{ic}
\end{align}
where $\kappa$ and $ \kappa_i ( i = 1,2 \cdots, N)$  are the diffusion constants, $\delta_0$, 
 $\tau$ and $ \tau_i ( i = 1,2 \cdots, N)$ are positive constants, $\nu$ is the outward normal vector to $\Gamma$, and $R_i: {\mathbb R}^N \to {\mathbb R}$ is given as 
$$ R_i(u) = \frac{1}{2} \sum_{k+j = i} \beta_{kj} u_k^+ u_j^+ - 
\sum_{j=1}\beta_{ij} u_i^+ u_j^+ \mbox{ for each } i, $$
where $r^+$ indicates its positive part for $r \in {\mathbb R}$,  
 $\beta_{kj}$ are positive constants (discrete values of aggregation and fragmentation kernels) such that $\beta_{kj} = \beta_{jk}$ ($j, k =1,2 \cdots, N$). 
Moreover, for a given choice of $\delta > 0$, we use following notation:
$$ J(x) = \left\{ \begin{array}{ll}
                C_m\exp(- \frac{1}{1 -|x|^2}) & \mbox{ if } |x| <1, \\
                 0 & \mbox{ otherwise}, 
           \end{array} \right. $$
where $C_m$ is a positive constant chosen such that $\int_{{\mathbb R}^3} J(x) dx = 1$.
Here, we put $J_{\delta}(x) = \delta^{-3} J(x/\delta)$ and,  for a measurable function $f$ on $\Omega$, we employ 
$$ \nabla^{\delta} f = \nabla (J_{\delta} \ast f ) = 
\nabla (\int_{{\mathbb R}^3} J_{\delta}(x - y) f(y) dy).  $$

We denote by P the above system (\ref{eq1}) $\sim$ (\ref{ic}). 

The pair $(\theta,u)$ refers to the  unknowns in the system, i.e.  $\theta$ is the temperature field, while  $u = (u_1, u_2, \cdots, u_N)$ is the vector of $N$ interacting colloidal populations.  The reaction term $R(\cdot)$ models the classical Smoluchovski interaction production (see, for instance, chapter 2 in \cite{Krehel}).

The structure of this system has been proposed in \cite{Krehel} as a mathematical model supposed to describe simultaneous effects   between heat conduction and moelcular diffusion arising when populations of hot colloids like to "diffuse" inside porous materials. The process is usually called {\em thermo-diffusion} and considerable phenomenological understanding is available (compare \cite{Mazur} or the more recent accounts by Wojnar \cite{Wojnar1,Wojnar2}). Regarding the presence in the right-hand side of the model equaations of the products between temperature and concentration gradients -- mimicking thermodynamic Soret and Dufour effects pointing out a strong interplay between heat conduction and molecular diffusion --  we refer the reader to \cite{Oleh-A-M,Oleh-A-K,Sina}. In these settings, such strongly nonlinear  structures arising in the model equations play a decisive role in capturing the expected evolution of the physical system. 

In the framework of this paper, we are interested in understanding the large time behavior of a given porous material exposed to thermo-diffusive infiltrations, very much in the spirit of related mathematical work done for a conceptually different problem referring to the chemical corrosion of concrete; see e.g. \cite{Toyohiko, Kota}.  

The major mathematical difficulty encountered here is the presence of nonlinear terms of the type $\nabla \theta \cdot \nabla u_i$.   A careful look at our estimates will discover that the presence of terms like $\nabla^{\delta_0} u_i \cdot \nabla \theta$ is essential to ensure ultimately a good (time independent) control on the $L^\infty$ bounds on the gradients of both temperature and colloidal concentrations. The regularization parameter $\delta_0$ arising in  $\nabla^{\delta_0} u_i$ can be removed only in one space dimension using a suitable combination of compactness arguments for strong solutions to problem P and the Leray-Schauder fixed point principle; see for instance the line of thought in \cite{Benes}. 

Using a couple of approximating problems and a suitable grip on the gradient of concentrations and of the gradient of temperature, we prove that, for sufficiently large time, all transport terms in problem P disappear, the limiting evolution of the concentration being simply governed by the ordinary differential equations governing the Smoluchowski dynamics.

\section{Main result}

We begin with the definition of our concept of solution to problem P. 

\begin{defi}
Let $\theta$ and $u_i (i=1,2, \cdots, N)$ be  functions on $Q (T) $ for $T > 0$ and 
$u = (u_1, u_2, \cdots, u_N)$. 
We call that a pair $\{\theta, u\}$ is a solution of P on $[0, T]$ if the conditions (S1) and (S2) hold: 
\begin{itemize}
	\item[(S1)] $(\theta, u) \in X(T)^{N+1}$, where $X(T) = L^{\infty}(Q(T)) \cap W^{1,2}(0,T; L^2(\Omega)) \cap L^{\infty}(0,T; H^1(\Omega)) \cap L^2(0,T; H^2(\Omega))$. 
	\item[(S2)]  (\ref{eq1}) $\sim$ (\ref{ic}) hold in the usual sense.  
\end{itemize}
Moreover, we say that  $\{\theta, u\}$ is a solution of P on $[0, \infty)$, 
if it is a solution of P on $[0, T]$ for any $T >0$.  
\end{defi}

For simplicity, we put 
$$ L^{\infty}_+(\Omega) = \{z \in L^{\infty}(\Omega): z \geq 0 \mbox{ a.e. on } \Omega\},  $$
and write 
$$ |u|_{L^2(\Omega)} :=  |u|_{L^2(\Omega)^N},   
 |\nabla \theta|_{L^2(\Omega)} :=  |\nabla \theta|_{L^2(\Omega)^3} \mbox{ and so on. } $$

The first theorem of this paper guarantees the existence and the large time behavior of the problem P. 
\begin{theo} \label{th1}
If $\theta_0 \in H^1(\Omega) \cap L^{\infty}_+(\Omega)$ and  
$u_{0i} \in H^1(\Omega) \cap L^{\infty}_+(\Omega)$ for each $i$, then P has a solution on $[0, \infty)$.
Moreover,   for each $i=1,2 \cdots, N$ we have $u_i(t) \to 0$ as $t \to \infty$. More precisely, 
there exists a function $y_i \in L^{\infty}(0, \infty) \cap L^2(0,\infty)$ such that 
\begin{equation}
 0 \leq u_i(t,x) \leq y_i(t) \quad \mbox{ for a.e. } x \in \Omega \mbox{ and } t > 0. 
\label{assert2}
\end{equation} 
\end{theo}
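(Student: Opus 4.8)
The plan is to establish Theorem~\ref{th1} in two stages: first the existence of a global solution on $[0,\infty)$, and then the decay assertion~(\ref{assert2}). For existence, I would build solutions on an arbitrary finite interval $[0,T]$ via the approximation scheme alluded to in the introduction, obtaining uniform (in $T$-independent) \emph{a priori} estimates that allow passage to the limit and extension to $[0,\infty)$. The key structural features to exploit are the homogeneous Neumann boundary conditions, which make the spatial averages natural quantities, and the mollified transport coefficient $\nabla^{\delta_0}u_i$ in (\ref{eq1}), whose smoothing is exactly what permits a $T$-independent $L^\infty$ bound on $\nabla\theta$. The first block of estimates would be the standard energy/maximum-principle bounds: testing (\ref{eq2}) against $u_i^-$ (together with the sign structure of $R_i$, which contains only positive parts) gives nonnegativity $u_i\ge 0$, and the comparison structure of the Smoluchowski reaction yields an $L^\infty$ bound; similarly $\theta\ge 0$ and $\theta$ is bounded in $L^\infty(Q(T))$. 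One then bootstraps to the $H^1\cap H^2$ regularity of the solution class $X(T)$ by testing with $\theta_t$, $u_{it}$ and with $-\Delta\theta$, $-\Delta u_i$, using the $L^\infty$ control of $\nabla^{\delta_0}u_i$ to absorb the transport terms.

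For the decay statement, the essential observation is that the reaction vector $R_i(u)$ has the mass-dissipative Smoluchowski structure: summing the weighted total mass $\sum_i i\,u_i$ is conserved (formally, $\sum_i i R_i(u)=0$), while the total number $\sum_i u_i$ is nonincreasing because coagulation only reduces particle count. The plan is therefore to integrate (\ref{eq2}) over $\Omega$; the diffusion and mollified-transport terms in (\ref{eq1})--(\ref{eq2}) are designed so that their spatial integrals vanish or are controlled, leaving an inequality for the spatial average $\bar u_i(t)=\frac{1}{|\Omega|}\int_\Omega u_i\,dx$ that is governed, up to transport corrections, by the ODE system $\dot y_i = R_i(y)$ of the discrete Smoluchowski dynamics. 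I would then invoke the known long-time behavior of that finite ODE system---for positive aggregation constants the finite coagulation-dominated system drives every $u_i\to 0$ as mass accumulates in the top cluster or escapes the finitely many bins---to produce a scalar envelope $y_i(t)$ with $y_i\to0$ and $y_i\in L^\infty(0,\infty)\cap L^2(0,\infty)$, establishing the pointwise bound~(\ref{assert2}) by a comparison argument for the parabolic equation (\ref{eq2}) against the spatially homogeneous supersolution $y_i$.

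The technical heart, and the step I expect to be the main obstacle, is making the comparison against a \emph{spatially homogeneous} envelope rigorous in the presence of the cross-diffusion terms $\tau_i\nabla\theta\cdot\nabla u_i$. A spatially constant function $y_i(t)$ has zero gradient and zero Laplacian, so it annihilates both the diffusion term $-\kappa_i\Delta u_i$ and the transport term $-\tau_i\nabla\theta\cdot\nabla u_i$; thus $y_i$ solving the pure ODE $\dot y_i=R_i^{\sup}(y)$ (with $R_i$ majorized suitably by its off-diagonal gain terms) should be a supersolution provided the reaction nonlinearity is handled with care, since $R_i$ couples different components and is not monotone in $u_i$ alone. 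I would control this by using the specific sign pattern---the loss term $-\sum_j\beta_{ij}u_i^+u_j^+$ is favorable (it only helps decay), while the gain term $\frac12\sum_{k+j=i}\beta_{kj}u_k^+u_j^+$ must be bounded using the already-established uniform $L^\infty$ bounds on the $u_j$. The delicate point is to close the loop so that the resulting $y_i$ is genuinely integrable in time: this requires quantitative decay of the total mass, which follows from the strict dissipativity of coagulation, and I would extract the $L^2(0,\infty)$ integrability from an energy identity for $\sum_i y_i$ together with the boundedness of $y_i$. Everything before this comparison step is relatively routine parabolic theory; the subtlety lies in coupling the ODE asymptotics to the PDE via a correctly signed comparison principle that survives the cross-diffusion coupling.
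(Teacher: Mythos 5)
Your second strand (comparison against a spatially homogeneous supersolution) is indeed the mechanism the paper uses, but as you set it up it does not close, and your first strand (spatial averaging) is a dead end. Averaging (\ref{eq2}) over $\Omega$ does not produce a closed inequality for $\bar u_i$: the transport term $\tau_i\int_\Omega \nabla\theta\cdot\nabla u_i\,dx$ neither vanishes nor is controlled by $\bar u_i$, and $\int_\Omega R_i(u)\,dx\neq R_i(\bar u)\,|\Omega|$ since $R_i$ is quadratic. More seriously, in the comparison strand you propose to bound the gain term $\frac12\sum_{k+j=i}\beta_{kj}u_k^+u_j^+$ ``using the already-established uniform $L^\infty$ bounds on the $u_j$''. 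If you do that, the envelope ODE becomes $y_i'=C-\beta_{ii}y_i^2$ with a constant $C>0$, whose solutions converge to the positive equilibrium $\sqrt{C/\beta_{ii}}$; such a $y_i$ neither tends to zero nor lies in $L^2(0,\infty)$, so (\ref{assert2}) is not established. The missing idea is the triangular structure of the Smoluchowski kernel: the gain for component $i$ involves only indices $k,j<i$ (since $k+j=i$ with $k,j\ge 1$). This permits an induction on $i$, which is exactly the paper's Lemma \ref{lem4-2}: $y_1'=-\beta_{11}y_1^2$ gives $y_1(t)\sim 1/(\beta_{11}t)\in L^\infty(0,\infty)\cap L^2(0,\infty)$, and then $y_i'=\frac{\beta_0}{2}\sum_{k<i}y_k^2-\beta_{ii}y_i^2$, where the forcing is integrable in time precisely because the previously constructed envelopes $y_k$ are already known to decay and lie in $L^2(0,\infty)$; the comparison $u_i\le y_i$ is then proved by testing with $[u_i-y_i]^+$, as you suggest. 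Note also that the paper runs this comparison at the level of the truncated problems P$_{\varepsilon,n}$ and chooses $n\ge\max_i|y_i|_{L^\infty(0,\infty)}$, which simultaneously deactivates the truncation $\sigma_n$ and shows the approximate solutions solve P$_\varepsilon$ --- a step your scheme would also need.

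On the existence half, your assessment that everything before the comparison is ``relatively routine parabolic theory'' misses the main analytic difficulty, which the mollifier $\nabla^{\delta_0}u_i$ alone does not remove. The smoothing helps only in the $\theta$-equation (\ref{eq1}); in (\ref{eq2}) the coefficient is $\nabla\theta$ (or $\nabla^{\varepsilon}\theta$ with $\varepsilon\to0$), which is not uniformly bounded in $L^\infty$, so testing with $u_{it}$ and $-\Delta u_i$ produces terms like $\int_\Omega|\nabla^{\varepsilon}\theta_\varepsilon|^2|\nabla u_{i\varepsilon}|^2\,dx$ that cannot be absorbed by the standard energy quantities. The paper's resolution is the pair of $L^4$-gradient estimates (Lemmas \ref{lem4-4} and \ref{lem4-5}), obtained with the multipliers $\theta_\varepsilon|\nabla\theta_\varepsilon|^2$ and $u_{i\varepsilon}|\nabla u_{i\varepsilon}|^2$ together with the $H^2$ elliptic estimate (\ref{LU}); these $L^4$ bounds are also what make the passage to the limit in the nonlinear products possible (in particular the strong $L^2(Q(T))$ convergence of $\nabla\theta^{(\varepsilon_j)}$, which is needed to identify $\nabla\theta\cdot\nabla u_i$ in the limit). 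Without this ingredient, or a substitute for it, neither your uniform estimates nor your limit passage goes through.
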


The second theorem is concerned with the uniqueness of solutions to probelm P. 
\begin{theo} \label{th2}
(1) Under the same assumptions as in Theorem \ref{th1} if 
$\theta_0 \in W^{1, \infty}(\Omega)$ and  
$u_{0i} \in W^{1,\infty}(\Omega)$ for each $i$, then there exists a  solution $\{\theta, u\}$ of P  satisfying 
$\theta \in L^{\infty}(0,T; W^{1,\infty}(\Omega))$ and 
$u_i \in L^{\infty}(0,T; W^{1,\infty}(\Omega))$ for $i$. 

(2) Let $\{\theta^{(k)}, u^{(k)}\}$ be a solution of P for $k = 1, 2$. 
If  $\theta^{(k)} \in L^{\infty}(0,T; W^{1,\infty}(\Omega))$ and 
$u_i^{(k)} \in L^{\infty}(0,T; W^{1,\infty}(\Omega))$, $k = 1, 2$ and each $i$, then 
$\theta^{(1)} = \theta^{(2)}$ and  $u^{(1)} = u^{(2)}$ a.e on $Q(T)$.   
\end{theo}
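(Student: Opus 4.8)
The plan is to prove Theorem \ref{th2} in two stages, handling the regularity claim (1) first and then the uniqueness claim (2). For part (1), having produced a solution on $[0,\infty)$ via Theorem \ref{th1}, I would upgrade its spatial regularity to $W^{1,\infty}$ by differentiating the equations and performing energy/maximum-principle estimates on the gradients. Concretely, I would set $v = \nabla\theta$ and $w_i = \nabla u_i$, differentiate \eqref{eq1}--\eqref{eq2} in $x$, and exploit the crucial structural observation already emphasized in the introduction: the mollified term $\nabla^{\delta_0}u_i$ is smooth and bounded (since $\nabla^{\delta_0}u_i = \nabla(J_{\delta_0}\ast u_i)$ enjoys the $L^\infty$ control $|\nabla^{\delta_0}u_i|_{L^\infty(\Omega)}\le |\nabla J_{\delta_0}|_{L^1}\,|u_i|_{L^\infty(\Omega)}$, with the right-hand side already bounded by \eqref{assert2}). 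This is what makes the transport coefficient in \eqref{eq1} a genuinely tame, bounded vector field, so that a Gronwall argument on $|\nabla\theta(t)|_{L^\infty(\Omega)}$ closes, and the resulting bound on $\nabla\theta$ then feeds back into the equation \eqref{eq2} for $u_i$ to control $|\nabla u_i(t)|_{L^\infty(\Omega)}$. With initial data $\theta_0,u_{0i}\in W^{1,\infty}(\Omega)$, these estimates are uniform on each $[0,T]$, giving the asserted $L^\infty(0,T;W^{1,\infty}(\Omega))$ regularity.

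For part (2), I would take two solutions $\{\theta^{(1)},u^{(1)}\}$ and $\{\theta^{(2)},u^{(2)}\}$ with the stated $W^{1,\infty}$ regularity, form the differences $\Theta = \theta^{(1)}-\theta^{(2)}$ and $U_i = u_i^{(1)}-u_i^{(2)}$, and derive the system they satisfy by subtracting the two copies of \eqref{eq1}--\eqref{eq2}. The standard device is to test the difference equations against $\Theta$ and $U_i$ respectively, integrate over $\Omega$, use the homogeneous Neumann conditions to integrate the diffusion terms by parts (producing the good dissipation terms $\kappa|\nabla\Theta|_{L^2(\Omega)}^2$ and $\kappa_i|\nabla U_i|_{L^2(\Omega)}^2$), and then estimate every cross term on the right-hand side. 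The reaction differences $R_i(u^{(1)})-R_i(u^{(2)})$ are Lipschitz on the bounded set where both solutions live (the positive-part truncation and the $L^\infty$ bounds guarantee this), so they contribute a harmless $\sum_i|U_i|_{L^2(\Omega)}^2$ term. The goal is to reach a differential inequality of the form
\begin{equation}
\frac{1}{2}\frac{d}{dt}\Big(|\Theta|_{L^2(\Omega)}^2 + \sum_i |U_i|_{L^2(\Omega)}^2\Big) + (\text{dissipation}) \le C\Big(|\Theta|_{L^2(\Omega)}^2 + \sum_i |U_i|_{L^2(\Omega)}^2\Big),
\label{uniqgronwall}
\end{equation}
after which Gronwall's inequality together with $\Theta(0)=0$, $U_i(0)=0$ forces $\Theta\equiv 0$ and $U_i\equiv 0$ on $Q(T)$.

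The main obstacle is controlling the transport-term differences, which is precisely where the Soret--Dufour coupling and the mollification asymmetry bite. Writing the difference of the $\theta$-transport terms schematically as
\begin{equation}
\tau\sum_i\big(\nabla^{\delta_0}u_i^{(1)}\cdot\nabla\theta^{(1)} - \nabla^{\delta_0}u_i^{(2)}\cdot\nabla\theta^{(2)}\big)
= \tau\sum_i\big(\nabla^{\delta_0}U_i\cdot\nabla\theta^{(1)} + \nabla^{\delta_0}u_i^{(2)}\cdot\nabla\Theta\big),
\end{equation}
I would absorb the second summand using the $L^\infty$ bound on $\nabla^{\delta_0}u_i^{(2)}$ and Young's inequality against the dissipation $\kappa|\nabla\Theta|_{L^2(\Omega)}^2$, while the first summand is handled by the smoothing property of the mollifier: $|\nabla^{\delta_0}U_i|_{L^2(\Omega)} \le |\nabla J_{\delta_0}|_{L^1}\,|U_i|_{L^2(\Omega)}$, which converts a gradient difference into an $L^2$ difference with no derivative loss and uses the $W^{1,\infty}$ bound on $\nabla\theta^{(1)}$ as a constant. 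The genuinely non-mollified term $\tau_i\nabla\theta\cdot\nabla u_i$ in \eqref{eq2} is the delicate one, since its difference contains $\nabla\Theta\cdot\nabla u_i^{(1)}$, a product of two gradient differences with no smoothing available; here I would lean on the $W^{1,\infty}$ regularity from part (1) — bounding $|\nabla u_i^{(1)}|_{L^\infty(\Omega)}$ — to estimate $\int_\Omega (\nabla\Theta\cdot\nabla u_i^{(1)})\,U_i\,dx$ by $\varepsilon|\nabla\Theta|_{L^2(\Omega)}^2 + C_\varepsilon|U_i|_{L^2(\Omega)}^2$, again absorbing the gradient into the dissipation. Getting all these absorptions to fit simultaneously under the available dissipation budget, so that \eqref{uniqgronwall} holds with no uncontrolled $|\nabla\Theta|_{L^2(\Omega)}^2$ or $|\nabla U_i|_{L^2(\Omega)}^2$ left over, is the crux, and it is exactly what forces the hypothesis that both solutions be $W^{1,\infty}$ in space.
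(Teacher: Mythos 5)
Your plan for part (1) has a genuine gap at the feedback step for $u_i$. For the temperature the differentiation strategy is at least formally viable, precisely because the coefficient in \eqref{eq1} is mollified: $\partial_{x_k}\nabla^{\delta_0}u_i$ is bounded by $|D^2 J_{\delta_0}|_{L^1({\mathbb R}^3)}\,|u_i|_{L^\infty}$, so the differentiated $\theta$-equation only sees bounded coefficients. But when you differentiate \eqref{eq2} you produce the term $\tau_i\,(\partial_{x_k}\nabla\theta)\cdot\nabla u_i$, which contains the Hessian $D^2\theta$. An $L^\infty$ bound on $\nabla\theta$ --- which is all your Gronwall argument for $\theta$ can deliver --- gives no control on $D^2\theta$ in any norm that closes the estimate, so the proposed Gronwall inequality for $|\nabla u_i(t)|_{L^\infty(\Omega)}$ does not close. (A secondary issue: maximum-principle estimates for $|\nabla\theta|^2$ under Neumann conditions produce boundary terms involving the second fundamental form of $\Gamma$, which have no sign unless $\Omega$ is convex.) The paper avoids differentiating the equations altogether: it reads \eqref{eq1} as a \emph{linear} parabolic equation for $\theta$ with bounded coefficient $\nabla^{\delta_0}u_i\in L^\infty(Q(T))^3$ and invokes the classical regularity theory of \cite{LSU} to get $\nabla\theta\in L^\infty(Q(T))^3$; then \eqref{eq2} is in turn a linear parabolic equation for $u_i$ with bounded coefficient $\nabla\theta$ and bounded right-hand side $R_i(u)$, so the same theory yields $\nabla u_i\in L^\infty(Q(T))^3$. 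If you wanted to make this self-contained instead of citing \cite{LSU}, the correct substitute for your differentiation argument is an $L^p$ bootstrap (treat $\tau_i\nabla\theta\cdot\nabla u_i + R_i(u)$ as a right-hand side, apply maximal parabolic $L^p$ regularity and the parabolic Sobolev embedding, and iterate until $p>5$, where $W^{2,1}_p(Q(T))$ embeds into a space with bounded spatial gradient), not a pointwise estimate on the differentiated equation.

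Part (2) of your proposal is correct and is essentially the paper's own uniqueness proof: the same difference system, the same testing by $\Theta$ and $U_i$, the same use of \eqref{moli1} to convert $\nabla^{\delta_0}(u_i^{(1)}-u_i^{(2)})$ into an $L^2$ difference with no derivative loss, the same Lipschitz bound on the reaction differences, the same Young-inequality absorptions into the combined dissipation after adding the two energy inequalities, and Gronwall. The only cosmetic difference is which factor of the transport term in \eqref{eq2} carries the difference (you use $\nabla\Theta\cdot\nabla u_i^{(1)}+\nabla\theta^{(2)}\cdot\nabla U_i$, the paper uses $\nabla U_i\cdot\nabla\theta^{(1)}+\nabla u_i^{(2)}\cdot\nabla\Theta$); both decompositions close in the same way.
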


The remainder of the paper is concerned with proving these two results. 

To prove Theorem \ref{th1} we consider the approximation problem 
P$_{\varepsilon} :=$ \\ $  \{(\ref{eq1}), (\ref{eqa2}), (\ref{bc}), (\ref{ic})\}$ of the problem P 
for $\varepsilon >0$: 
\begin{equation}
u_{it} - \kappa_i \Delta u_{i} - \tau_i \nabla^{\varepsilon} \theta \cdot \nabla u_i =  R_i(u)  \mbox{ in } Q(T) 
       \mbox{ for each } i, 
\label{eqa2}
\end{equation} 
Moreover, we approximate P$_{\varepsilon}$ by P$_{\varepsilon, n} :=$ $\{(\ref{eq1}), (\ref{eqam2}), (\ref{bc}), (\ref{ic})\}$  for  $\varepsilon >0$ and  $n >0$: 
\begin{equation}
u_{it} - \kappa_i \Delta u_{i} - \tau_i \nabla^{\varepsilon} \theta \cdot \nabla u_i =  R_{in}(u)  \mbox{ in } Q(T)        \mbox{ for each } i, 
\label{eqam2}
\end{equation}
where $R_{in}(s_1, s_2, \cdots, s_N) := R_i(\sigma_n(s_1), \sigma_n(s_2), \cdots, \sigma_n(s_N))$ and 
$$
\sigma_n(r) = \left\{ \begin{array}{ll} 
      n & \mbox{ if } r > n, \\
      r & \mbox{ if }  0 \leq r \leq n, \\
      0 & \mbox{ otherwise,  }  \end{array} \right. 
          \quad \mbox{ for } r \in {\mathbb R}. 
$$

\vskip 12pt
In Section \ref{sec3} we shall show the existence of a solutions to both P$_{\varepsilon}$ and P$_{\varepsilon, n}$ for $\varepsilon > 0$ and $n > 0$. We give some uniform estimates for solutions of P$_{\varepsilon}$ with respect to $\varepsilon$ in Section \ref{sec4}. 
Finally, after controlling in terms of uniform estimates the solutions to the auxiliary problems, we give in Section \ref{final} the proofs of Theorems \ref{th1} and \ref{th2}. 

Throughout this paper we assume that the boundary $\Omega$ is sufficiently smooth such that 
\begin{equation}
 |f|_{H^2(\Omega)} \leq C_{\Omega} ( |\Delta f|_{L^2(\Omega)}  + |f|_{H^1(\Omega)}) 
        \mbox{ for } f \in H^2(\Omega) \mbox{ with } \nabla f \cdot \nu = 0 \mbox{ on } \Gamma,  \label{LU}
\end{equation}
where $C_{\Omega}$ is a positive constant (see Theorem 25.3 in Chapter of \cite{Neumann-H2}).

Finally, we list here a couple of very useful inequalities concerned with $\nabla^{\varepsilon}$ 
and $J_{\varepsilon}$ (see,  for example,   \cite{Adams}). 
For all $1 \leq p \leq \infty$, $q > 1$ and $\varepsilon  > 0$ it holds that 
\begin{align}
& |\nabla^{\varepsilon} f |_{L^p(\Omega)} \leq c_{p,\varepsilon} |f|_{L^2(\Omega)} \quad \mbox{ for }
          f \in L^2(\Omega),  \label{moli1} \\
& |\nabla^{\varepsilon} f |_{L^q(\Omega)} \leq
 c_{q} |\nabla f|_{L^q(\Omega)} \quad \mbox{ for }
          f \in W^{1,q}(\Omega), \label{moli2} \\
& |J_{\varepsilon}*f|_{L^2(\Omega)} \leq |f|_{L^2(\Omega)} \quad \mbox{ for }
          f \in L^2(\Omega), \label{moli3} \end{align}
where $c_{p,\varepsilon}$ and $c_q$ are positive constants.

\section{Approximate  problems} \label{sec3}
The aim of this section is to provide the following proposition.

\begin{prop} \label{pro1}
Let $\varepsilon > 0$, $T> 0$ and $n > 0$. If $\theta_0 \in H^1(\Omega) \cap L_+^{\infty}(\Omega)$ and  $u_{0i} \in H^1(\Omega) \cap L_+^{\infty}(\Omega)$ for each $i$, 
then P$_{\varepsilon,n}$  has a unique solution on $[0, T]$. 
\end{prop}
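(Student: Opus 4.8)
The plan is to solve P$_{\varepsilon,n}$ by a fixed-point argument that reduces the coupled nonlinear system to a family of linear parabolic equations. The key observation is that the two regularizations present in P$_{\varepsilon,n}$ defuse both sources of nonlinearity. By the mollifier estimate (\ref{moli1}) (applied with parameter $\delta_0$ and with parameter $\varepsilon$), the drift coefficients $\nabla^{\delta_0}\bar u_i$ and $\nabla^{\varepsilon}\bar\theta$ are smooth and bounded in $L^\infty(\Omega)$ by $c_{\infty,\delta_0}|\bar u_i|_{L^2(\Omega)}$ and $c_{\infty,\varepsilon}|\bar\theta|_{L^2(\Omega)}$ respectively, so they are controlled by the weakest possible norms of their arguments and no derivative of $\bar u$ or $\bar\theta$ ever enters the coefficients. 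Moreover, the truncation $\sigma_n$ makes $R_{in}$ globally Lipschitz and bounded on ${\mathbb R}^N$. Consequently, once the coupling is frozen, each equation is merely a linear parabolic equation with bounded smooth first-order coefficient and bounded right-hand side, for which existence, uniqueness, and the regularity demanded by (S1) are classical.

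Concretely, I would fix $(\bar\theta,\bar u)\in C([0,T];L^2(\Omega))^{N+1}$ and define $\Phi(\bar\theta,\bar u)=(\theta,u)$ by solving, with the boundary and initial data (\ref{bc})--(\ref{ic}), the linear problems $\theta_t-\kappa\Delta\theta-\tau\sum_i\nabla^{\delta_0}\bar u_i\cdot\nabla\theta=0$ and, for each $i$, $u_{it}-\kappa_i\Delta u_i-\tau_i\nabla^{\varepsilon}\bar\theta\cdot\nabla u_i=R_{in}(\bar u)$. The central a priori estimates come from testing the $\theta$-equation successively with $\theta$ and with $-\Delta\theta$, and likewise for each $u_i$. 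In each identity the only delicate term is the transport contribution; writing $b:=\tau\sum_i\nabla^{\delta_0}\bar u_i$ and using its $L^\infty$-bound from (\ref{moli1}) together with Young's inequality, the estimate $|\int(b\cdot\nabla\theta)(-\Delta\theta)|\le |b|_{L^\infty(\Omega)}|\nabla\theta|_{L^2(\Omega)}|\Delta\theta|_{L^2(\Omega)}$ lets the first-order term be absorbed into $\kappa|\Delta\theta|_{L^2(\Omega)}^2$. A Gronwall argument then yields bounds in $L^\infty(0,T;H^1(\Omega))\cap L^2(0,T;H^2(\Omega))$; reading $\theta_t=\kappa\Delta\theta+b\cdot\nabla\theta$ off the equation gives the $W^{1,2}(0,T;L^2(\Omega))$ bound, while the elliptic estimate (\ref{LU}) upgrades this to full $H^2$-control. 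The $L^\infty(Q(T))$ bound follows from the maximum principle: for $\theta$ the absence of a source gives $0\le\theta\le|\theta_0|_{L^\infty(\Omega)}$, and for $u_i$ the boundedness of $R_{in}$ by some $C(n)$ gives $|u_i(t)|_{L^\infty(\Omega)}\le|u_{0i}|_{L^\infty(\Omega)}+tC(n)$. Hence $\Phi$ maps $C([0,T];L^2(\Omega))^{N+1}$ into a bounded subset of $X(T)^{N+1}$.

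To produce the fixed point I would estimate $\Phi(\bar\theta,\bar u)-\Phi(\bar\theta',\bar u')$ in $C([0,T];L^2(\Omega))$. The difference of two drift coefficients is again controlled via (\ref{moli1}) by $|\bar u_i-\bar u_i'|_{L^2(\Omega)}$ (resp. $|\bar\theta-\bar\theta'|_{L^2(\Omega)}$), and the difference of the reaction terms by the global Lipschitz constant of $R_{in}$ times $|\bar u-\bar u'|_{L^2(\Omega)}$; a standard energy estimate, using again the $L^\infty$ drift bound to absorb the transport terms, then shows that $\Phi$ is a contraction on $C([0,T_0];L^2(\Omega))^{N+1}$ provided $T_0$ is small enough. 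Since the a priori bounds above are finite on every finite interval (they deteriorate as $T$ grows but cannot blow up in finite time, the truncation keeping $R_{in}$ bounded), the local solution extends to $[0,T]$ by iterating the contraction over consecutive subintervals of uniform length. The resulting fixed point solves P$_{\varepsilon,n}$ and, by the estimates of the previous paragraph, belongs to $X(T)^{N+1}$, which settles existence.

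Uniqueness within the class (S1) is obtained directly: given two solutions $(\theta^{(1)},u^{(1)})$ and $(\theta^{(2)},u^{(2)})$, I subtract the equations, test the differences with themselves, and absorb the transport terms exactly as above through (\ref{moli1}) while estimating the reaction difference by the Lipschitz bound on $R_{in}$; Gronwall's inequality then forces the differences to vanish. I expect the genuine obstacle of the argument to be precisely the control of the first-order transport terms $\nabla^{\delta_0}\bar u_i\cdot\nabla\theta$ and $\nabla^{\varepsilon}\bar\theta\cdot\nabla u_i$ in the energy identities; however this is entirely neutralized by the mollification, since (\ref{moli1}) bounds the drifts in $L^\infty(\Omega)$ by the $L^2$-norms of $\bar u$ and $\bar\theta$, so that the fixed-point scheme closes without any loss of derivatives. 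The harder analytical work, namely deriving estimates that are \emph{uniform} in $\varepsilon$ and $n$, is deferred to the subsequent sections.
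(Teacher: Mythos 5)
Your proposal is correct and follows the same overall strategy as the paper: exploit the fact that the mollifier estimate (\ref{moli1}) controls the drifts $\nabla^{\delta_0}\bar u_i$ and $\nabla^{\varepsilon}\bar\theta$ in $L^{\infty}(\Omega)$ by mere $L^2$-norms of the frozen arguments, reduce to linear parabolic problems, and run a Banach fixed-point argument on a short time interval whose length does not deteriorate, then extend. The differences are in the decomposition and in how uniqueness is obtained. The paper freezes only $\hat u$: its linearized problem LP($\hat u$) solves the $\theta$-equation with drift $\nabla^{\delta_0}\hat u_i$ and then feeds the \emph{solved} $\theta$ (not a frozen one) into the $u_i$-equations, so the contraction map $\Lambda$ acts on $u$ alone, on a ball $K_M(T)$ of $L^2(0,T;L^2(\Omega))^N$, and uniqueness is extracted from uniqueness of the fixed point. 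You freeze both $\bar\theta$ and $\bar u$, getting a fully decoupled family of linear equations and a map on $C([0,T];L^2(\Omega))^{N+1}$, and you prove uniqueness separately by an energy estimate on the difference of two solutions; this is arguably cleaner, since it gives uniqueness directly in the whole class (S1) rather than among fixed points in a prescribed ball. One point you should state more carefully: $\Phi$ is \emph{not} a contraction on all of $C([0,T_0];L^2(\Omega))^{N+1}$, because the Lipschitz constant of $\Phi$ involves the $H^1$-bounds of the outputs $\theta',u'$, which grow with the norm of the frozen input. However, your own uniform $L^{\infty}$-bounds ($0\le\theta\le|\theta_0|_{L^{\infty}(\Omega)}$ and $|u_i|\le|u_{0i}|_{L^{\infty}(\Omega)}+TC(n)$, independent of the input) show that $\Phi$ maps everything into a fixed closed bounded set $B$, and restricted to $B$ the contraction estimate closes for small $T_0$; this is exactly the role played by the paper's invariant set $K_M(T)$, so the imprecision is harmless rather than a gap.
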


The proof is similar to the proof of \cite[Theorem 3.8]{Oleh-A-M}. 
However, to establish the large-time behavior of the solution we need more precise estimates.
Then we give the proof of Proposition \ref{pro1}, here. 

First, for a given function $\hat{u} = (\hat{u}_1, \hat{u}_2, \cdots, \hat{u}_N)$   
on $Q(T)$ we consider the following problem LP($\hat{u}$): 
\begin{align}
	& \theta_t - \kappa \Delta \theta - \tau \sum_{i=1}^N \nabla^{\delta_0} \hat{u}_i \cdot \nabla \theta =0 
                   \mbox{ in } Q(T),  \label{lp1} \\
	& u_{it} - \kappa_i \Delta u_{i} - \tau_i \nabla^{\varepsilon} \theta \cdot \nabla u_i =  
   R_{in}(\hat{u})  \mbox{ in } Q(T) 
       \mbox{ for each } i, \label{lp2} \\
	& - \kappa \nabla\theta \cdot \nu =0, - \kappa_i \nabla u_i \cdot \nu =0 \mbox{ for each } i 
 \mbox{ on } S(T),  \label{lpbc} \\
	& \theta(0, x) = \theta_0 (x), u_i(0,x) = u_{0i}(x)  \mbox{ for each } i  \mbox{ on } \Omega. \label{lpic}
\end{align}

Since this problem  LP($\hat{\theta}, \hat{u}$) is linear, we can easily get:
\begin{lemm} \label{lem3-2}
Let $\varepsilon > 0$, $n > 0$ and $T> 0$. If $\theta_0 \in H^1(\Omega)$, $u_{0i} \in H^1(\Omega)$  and $\hat{u}_i \in L^2(0, T; L^2(\Omega))$
for each $i$, then  LP($\hat{u}$)  has a unique solution $(\theta, u) \in X(T)^{N+1}$ on $[0, T]$. 
\end{lemm}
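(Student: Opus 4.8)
The plan is to prove existence and uniqueness for the decoupled linear system LP($\hat u$) by solving the two equations sequentially, since equation \eqref{lp1} for $\theta$ does not involve $u$, and equation \eqref{lp2} for each $u_i$ depends on $\theta$ only through the fixed (once $\theta$ is known) transport coefficient $\nabla^\varepsilon\theta$. First I would treat \eqref{lp1} as a linear parabolic equation for $\theta$ alone. The coefficient $\tau\sum_i \nabla^{\delta_0}\hat u_i$ is a drift term whose regularity must be checked: since $\hat u_i \in L^2(0,T;L^2(\Omega))$, the mollified gradient $\nabla^{\delta_0}\hat u_i = \nabla(J_{\delta_0}*\hat u_i)$ is smooth in space, and by \eqref{moli1} it satisfies $|\nabla^{\delta_0}\hat u_i|_{L^\infty(\Omega)} \le c_{\infty,\delta_0}|\hat u_i|_{L^2(\Omega)}$, so the drift belongs to $L^2(0,T;L^\infty(\Omega))$. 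With $\theta_0\in H^1(\Omega)$ and homogeneous Neumann data, standard linear parabolic theory (Galerkin approximation or semigroup methods) yields a unique $\theta\in W^{1,2}(0,T;L^2(\Omega))\cap L^\infty(0,T;H^1(\Omega))\cap L^2(0,T;H^2(\Omega))$; the $H^2$ regularity follows from the elliptic estimate \eqref{LU} applied to $-\kappa\Delta\theta = -\theta_t + \tau\sum_i\nabla^{\delta_0}\hat u_i\cdot\nabla\theta$, whose right-hand side is controlled in $L^2(Q(T))$.

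Having obtained $\theta$, I would next solve each scalar equation \eqref{lp2} for $u_i$. The drift coefficient here is $\tau_i\nabla^\varepsilon\theta$, and since $\theta\in L^\infty(0,T;H^1(\Omega))$, inequality \eqref{moli2} gives $\nabla^\varepsilon\theta\in L^\infty(0,T;L^q(\Omega))$ for every $q>1$; in fact the mollification makes $\nabla^\varepsilon\theta$ bounded in space, so again the drift lies in $L^\infty(0,T;L^\infty(\Omega))$. The source term $R_{in}(\hat u)$ is, by the truncation $\sigma_n$, a bounded and hence $L^\infty(Q(T))$ function of $\hat u$. Thus each $u_i$ solves a linear parabolic problem with bounded drift, bounded source, $H^1$ initial data and Neumann boundary conditions, and the same linear theory delivers a unique solution in $W^{1,2}(0,T;L^2(\Omega))\cap L^\infty(0,T;H^1(\Omega))\cap L^2(0,T;H^2(\Omega))$. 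To place the solution in $X(T)$ I still need the $L^\infty(Q(T))$ bound: for $\theta$ this comes from a maximum principle argument using that the drift term is in divergence-free-compatible form or via Stampacchia truncation, and for $u_i$ it follows similarly once the source is bounded.

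The main step requiring care is establishing the energy and regularity estimates uniformly enough to close the argument, in particular obtaining the $H^2$ spatial regularity and the $L^\infty(Q(T))$ bound simultaneously with the drift terms present. My plan is to test \eqref{lp1} and \eqref{lp2} successively against $\theta$, $-\Delta\theta$ (respectively $u_i$, $-\Delta u_i$), using \eqref{moli1}, \eqref{moli2}, \eqref{moli3}, Young's inequality and the Gagliardo--Nirenberg interpolation to absorb the drift contributions into the diffusion terms, then invoke \eqref{LU} to upgrade $|\Delta\theta|_{L^2}$ control to a full $H^2$ bound and apply Gronwall's lemma in time. The anticipated obstacle is less a conceptual one than a bookkeeping one: the drift terms $\nabla^{\delta_0}\hat u_i\cdot\nabla\theta$ and $\nabla^\varepsilon\theta\cdot\nabla u_i$ must be estimated so that their $H^1$-level contributions do not overwhelm the coercivity of the Laplacian, which is precisely where the smoothing afforded by the mollifiers $J_{\delta_0}$ and $J_\varepsilon$ is indispensable. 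Because the problem is genuinely linear once $\hat u$ is frozen, uniqueness follows immediately by subtracting two solutions and running the same energy estimate on the difference, where the source and initial data vanish and Gronwall forces the difference to zero.
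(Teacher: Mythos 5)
Your proposal is correct and is, in substance, exactly what the paper invokes: the paper gives no proof of Lemma \ref{lem3-2} at all (it merely remarks that LP($\hat{u}$) is linear), and the triangular solve you describe --- first the $\theta$-equation, whose drift $\tau\sum_i \nabla^{\delta_0}\hat{u}_i$ lies in $L^2(0,T;L^{\infty}(\Omega)^3)$ by (\ref{moli1}), then each $u_i$-equation, whose drift $\tau_i\nabla^{\varepsilon}\theta$ is bounded (again by (\ref{moli1}), applied to $\theta(t)\in L^2(\Omega)$; note that (\ref{moli2}), which you cite, only gives $L^q$ control from $\nabla\theta$, so (\ref{moli1}) is the inequality actually doing the work) and whose source $R_{in}(\hat{u})$ is bounded by construction of $\sigma_n$ --- together with energy estimates, (\ref{LU}), and Gronwall, is the standard argument behind the paper's assertion. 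Uniqueness by subtracting two solutions and repeating the $L^2$ energy estimate is likewise correct.

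One point deserves explicit attention in your write-up: the $L^{\infty}(Q(T))$ component of $X(T)$ cannot be obtained from the hypotheses of the lemma as stated. Your maximum-principle/Stampacchia step requires $\theta_0\in L^{\infty}(\Omega)$ and $u_{0i}\in L^{\infty}(\Omega)$, whereas the lemma assumes only $H^1(\Omega)$ data; and in three dimensions $H^1(\Omega)\not\subset L^{\infty}(\Omega)$. Indeed, if $\theta_0\in H^1(\Omega)\setminus L^{\infty}(\Omega)$, the solution cannot belong to $L^{\infty}(Q(T))$ at all, since $\theta(t)\to\theta_0$ in $L^2(\Omega)$ as $t\to 0$ would then force $\theta_0$ to be essentially bounded. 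So this is a defect of the statement itself rather than of your argument: the missing hypothesis $\theta_0, u_{0i}\in L^{\infty}_+(\Omega)$ is present in Lemma \ref{lem3-3}, in Proposition \ref{pro1}, and in every subsequent application, so nothing downstream is affected; but your proof should either add that hypothesis or weaken the conclusion to membership in $W^{1,2}(0,T;L^2(\Omega))\cap L^{\infty}(0,T;H^1(\Omega))\cap L^2(0,T;H^2(\Omega))$, rather than present the $L^{\infty}(Q(T))$ bound as following from the stated data.
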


For the proof of Proposition \ref{pro1}, we give two lemmas. 
\begin{lemm} \label{lem3-3}
Let $\varepsilon > 0$, $n > 0$ and $T> 0$. If $\theta_0 \in H^1(\Omega) \cap L_+^{\infty}(\Omega)$,  $u_{0i} \in H^1(\Omega)$  and $\hat{u}_i \in L^2(0, T; L^2(\Omega))$
for each $i$, then the solution $\{\theta, u\}$ of  LP($\hat{u}$) satisfies 
$$ 0 \leq \theta \leq |\theta_0|_{L^{\infty}(\Omega)} \quad \mbox{ a.e. on } Q(T). $$
\end{lemm}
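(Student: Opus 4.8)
The plan is to establish the two bounds separately by the Stampacchia (truncation) energy method, exploiting the two features of the temperature equation \eqref{lp1} that make a maximum principle available: it carries no source term, and its transport coefficient is bounded. First I would fix the drift vector $b(t,\cdot) := \tau\sum_{i=1}^N \nabla^{\delta_0}\hat u_i(t,\cdot)$ and record that it is controlled in $L^\infty(\Omega)$: applying \eqref{moli1} with $p=\infty$ and $\varepsilon=\delta_0$ gives $|b(t)|_{L^\infty(\Omega)}\le \tau\,c_{\infty,\delta_0}\sum_{i=1}^N |\hat u_i(t)|_{L^2(\Omega)}$, so that $t\mapsto |b(t)|_{L^\infty(\Omega)}^2$ belongs to $L^1(0,T)$ because $\hat u_i\in L^2(0,T;L^2(\Omega))$. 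Since Lemma \ref{lem3-2} already supplies a solution with $(\theta,u)\in X(T)^{N+1}$, the temperature enjoys $\theta_t\in L^2(Q(T))$, $\theta\in L^\infty(0,T;H^1(\Omega))\cap L^2(0,T;H^2(\Omega))$, which is precisely the regularity needed to use truncations of $\theta$ as admissible multipliers and to differentiate their $L^2$-norms in time.

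For the upper bound, set $M:=|\theta_0|_{L^\infty(\Omega)}$, multiply \eqref{lp1} by $(\theta-M)^+$ and integrate over $\Omega$. Integrating the diffusion term by parts and using the homogeneous Neumann condition in \eqref{lpbc} to annihilate the boundary integral, and noting that $\nabla(\theta-M)^+=\nabla\theta$ on $\{\theta>M\}$ and vanishes elsewhere, yields the energy identity
\[
\frac{1}{2}\frac{d}{dt}\,|(\theta-M)^+|_{L^2(\Omega)}^2 + \kappa\,|\nabla(\theta-M)^+|_{L^2(\Omega)}^2 = \int_\Omega \bigl(b\cdot\nabla(\theta-M)^+\bigr)(\theta-M)^+\,dx .
\]
I would then bound the convective right-hand side by Cauchy--Schwarz and absorb the gradient through Young's inequality, writing $w=(\theta-M)^+$ and using $\bigl|\int_\Omega (b\cdot\nabla w)\,w\,dx\bigr|\le \tfrac{\kappa}{2}|\nabla w|_{L^2(\Omega)}^2 + \tfrac{1}{2\kappa}|b|_{L^\infty(\Omega)}^2|w|_{L^2(\Omega)}^2$. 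After cancelling half of the diffusion term this leaves
\[
\frac{d}{dt}\,|(\theta-M)^+|_{L^2(\Omega)}^2 \le \frac{1}{\kappa}\,|b(t)|_{L^\infty(\Omega)}^2\,|(\theta-M)^+|_{L^2(\Omega)}^2 .
\]
Because $(\theta-M)^+(0)=(\theta_0-M)^+=0$ and the coefficient $\kappa^{-1}|b|_{L^\infty(\Omega)}^2$ is integrable on $(0,T)$, Gronwall's lemma forces $(\theta-M)^+\equiv 0$, i.e. $\theta\le M$ a.e. on $Q(T)$.

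For the lower bound the computation is the mirror image: testing \eqref{lp1} with $-\theta^{-}$ (tracking signs through $\nabla\theta=-\nabla\theta^{-}$ on $\{\theta<0\}$) produces the very same energy identity with $(\theta-M)^+$ replaced by $\theta^{-}$, and since $\theta_0\in L^\infty_+(\Omega)$ gives $\theta^{-}(0)=\theta_0^{-}=0$, the identical Gronwall argument yields $\theta^{-}\equiv 0$, that is $\theta\ge 0$. The one point requiring care is that the transport coefficient $b$ is only $L^2$ in time (it is smooth, hence $L^\infty$, in space, but its norm may grow as $t$ varies); this is exactly why I absorb the whole convective term into the diffusion before applying Gronwall, so that the resulting multiplier $\kappa^{-1}|b(t)|_{L^\infty(\Omega)}^2$ is merely asked to be integrable in time rather than bounded, and no restriction on the length $T$ of the interval is needed. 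I expect this absorption step, together with verifying that the truncations are legitimate test functions under the $X(T)$-regularity, to be the only genuinely substantive part of the argument.
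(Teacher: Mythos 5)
Your proof is correct and follows essentially the same route as the paper's: the truncation (Stampacchia) energy method, testing with $[\theta - M]^+$ for the upper bound and with the negative part (equivalently, the paper tests the equation for $-\theta$ with $[-\theta]^+$) for the lower bound, absorbing the convective term into the diffusion via Young's inequality and closing with Gronwall. If anything, your handling of the drift is more careful than the paper's own: the paper estimates the convective term with the constant $|\nabla^{\delta_0}\hat u_i|_{L^{\infty}(Q(T))}$, which need not be finite under the stated hypothesis $\hat u_i \in L^2(0,T;L^2(\Omega))$ (inequality (\ref{moli1}) only gives $|\nabla^{\delta_0}\hat u_i(t)|_{L^{\infty}(\Omega)} \leq c_{\infty,\delta_0}|\hat u_i(t)|_{L^2(\Omega)}$, i.e.\ an $L^2$-in-time control), whereas you keep the coefficient time-dependent and apply Gronwall with a merely integrable multiplier, which is exactly what the hypothesis supports.
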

\begin{proof}
We put $\tilde{\theta} = -\theta$. Then, it holds that   
\begin{equation}
 \tilde{\theta}_t - \kappa \Delta \tilde{\theta} = 
\tau \sum_{i=1}^N \nabla^{\delta_0} \hat{u}_i \cdot \nabla \tilde{\theta} 
\quad \mbox{ a.e. on } Q(T).  \label{3-5}
\end{equation}
Here, by multiplying (\ref{3-5}) by $[\tilde{\theta}]^+$ and  then integrating it over $\Omega$ 
we obtain 
\begin{align*}
& \frac{1}{2} \frac{d}{dt} | [\tilde{\theta}]^+|_{L^2(\Omega)}^2 
 + \kappa  | \nabla [\tilde{\theta}]^+|_{L^2(\Omega)}^2 \\
= \ & \tau \sum_{i=1}^N   \int_{\Omega} ( \nabla^{\delta_0} \hat{u}_i \cdot \nabla \tilde{\theta})   
        [\tilde{\theta}]^+ dx  \\
\leq \ & \tau \sum_{i=1}^N | \nabla^{\delta_0} \hat{u}_i|_{L^{\infty}(Q(T))} 
            |\nabla [\tilde{\theta}]^+|_{L^2(\Omega)}  | [\tilde{\theta}]^+|_{L^2(\Omega)}  
   \quad \mbox{ a.e. on } [0,T]
\end{align*}
so that 
\begin{align}
 \frac{1}{2} \frac{d}{dt} | [\tilde{\theta}]^+|_{L^2(\Omega)}^2 
 + \frac{\kappa}{2}   | \nabla [\tilde{\theta}]^+|_{L^2(\Omega)}^2 
\leq   \frac{\tau^2 \sqrt{N}}{2 \kappa}  \sum_{i=1}^N | \nabla^{\delta_0} \hat{u}_i|_{L^{\infty}(Q(T))}^2 
             | [\tilde{\theta}]^+|_{L^2(\Omega)}^2   \mbox{ a.e. on } [0,T].  \label{3-6}
\end{align}
By  applying Gronwall's inequality to the above inequality, we get
$  | [\tilde{\theta}]^+(t)|_{L^2(\Omega)}^2 = 0$ for a.e $t \in[0,T]$, namely, 
$\theta \geq 0$ a.e. on $Q(T)$. 
 
Next, we put $M_0 = |\theta_0|_{L^{\infty}(\Omega)}$ and multiply  (\ref{lp1}) by 
$[\theta - M_0]^+$. Similarly to (\ref{3-6}), we can get 
\begin{align*}
 &  \frac{1}{2} \frac{d}{dt} | [\theta - M_0]^+|_{L^2(\Omega)}^2 
 + \frac{\kappa}{2}   | \nabla [\theta - M_0]^+|_{L^2(\Omega)}^2  \\
\leq \  &   \frac{\tau^2 \sqrt{N}}{2 \kappa}  \sum_{i=1}^N | \nabla^{\delta_0} \hat{u}_i|_{L^{\infty}(Q(T))} 
             | [\theta - M_0]^+|_{L^2(\Omega)}^2 \quad   \mbox{ a.e. on } [0,T].  
\end{align*}
The use of Gronwall's inequality completes the proof of this lemma. 
\end{proof}

\begin{lemm} \label{lem3-4}
Under the same assumptions as in Lemma \ref{lem3-3},  let 
$\{\theta, u\}$ be a solution of  LP($\hat{u}$) on $[0,T]$. 
Then there exists a positive constant $C_{n,\varepsilon}$ such that 
$$ |u_{i}(t)|_{L^2(\Omega)} \leq C_{n, \varepsilon} \quad \mbox{ for } 0 \leq t \leq T. 
$$
\end{lemm}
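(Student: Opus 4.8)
The plan is to derive a differential inequality for $|u_i(t)|_{L^2(\Omega)}^2$ and close it with Gronwall's inequality, exactly in the spirit of the computation just carried out in Lemma \ref{lem3-3}. First I would multiply equation (\ref{lp2}) by $u_i$ and integrate over $\Omega$. This produces $\frac{1}{2}\frac{d}{dt}|u_i|_{L^2(\Omega)}^2$ from the time-derivative term, a diffusion contribution, a transport contribution, and the reaction term $\int_{\Omega} R_{in}(\hat{u}) u_i \, dx$ on the right. For the diffusion term, integrating $-\kappa_i \int_{\Omega} \Delta u_i \, u_i \, dx$ by parts and using the homogeneous Neumann condition in (\ref{lpbc}) gives $\kappa_i |\nabla u_i|_{L^2(\Omega)}^2$ with vanishing boundary term. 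The reaction term is the easy one: because $R_{in}$ is evaluated at the truncations $\sigma_n(\hat{u}_j) \in [0,n]$, it is bounded by a constant $C_n$ depending only on $n$ and the kernels $\beta_{kj}$, so $\int_{\Omega} R_{in}(\hat{u}) u_i \, dx \leq C_n |\Omega|^{1/2} |u_i|_{L^2(\Omega)}$, which Young's inequality splits into a constant plus a multiple of $|u_i|_{L^2(\Omega)}^2$.

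The term I expect to require the most care --- and the one that forces the constant to depend on $\varepsilon$ --- is the transport term $\tau_i \int_{\Omega} (\nabla^{\varepsilon}\theta \cdot \nabla u_i) u_i \, dx$. The key observation is that Lemma \ref{lem3-3} bounds $\theta$ in $L^{\infty}(Q(T))$, hence in $L^2(\Omega)$ uniformly in $t$, and then (\ref{moli1}) applied with $p = \infty$ yields $|\nabla^{\varepsilon}\theta|_{L^{\infty}(\Omega)} \leq c_{\infty,\varepsilon} |\theta|_{L^2(\Omega)} \leq c_{\infty,\varepsilon} |\theta_0|_{L^{\infty}(\Omega)} |\Omega|^{1/2}$, a bound that is uniform in time but depends on $\varepsilon$. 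Consequently the transport term is controlled by $\tau_i |\nabla^{\varepsilon}\theta|_{L^{\infty}(\Omega)} |\nabla u_i|_{L^2(\Omega)} |u_i|_{L^2(\Omega)}$, and a further application of Young's inequality absorbs it as $\frac{\kappa_i}{2} |\nabla u_i|_{L^2(\Omega)}^2 + \frac{\tau_i^2 |\nabla^{\varepsilon}\theta|_{L^{\infty}(\Omega)}^2}{2\kappa_i} |u_i|_{L^2(\Omega)}^2$, where the gradient part is swallowed by the diffusion contribution obtained above.

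Collecting all the estimates and discarding the remaining nonnegative $\frac{\kappa_i}{2}|\nabla u_i|_{L^2(\Omega)}^2$ leaves a differential inequality of the form $\frac{d}{dt}|u_i|_{L^2(\Omega)}^2 \leq C_{n,\varepsilon} |u_i|_{L^2(\Omega)}^2 + C_{n,\varepsilon}$ on $[0,T]$. Gronwall's inequality then bounds $|u_i(t)|_{L^2(\Omega)}^2$ by a constant depending on $n$, $\varepsilon$, $T$ and the initial data $|u_{0i}|_{L^2(\Omega)}$, which is the asserted constant $C_{n,\varepsilon}$. The only genuinely structural ingredient is the mollification estimate (\ref{moli1}): it is precisely because $\nabla^{\varepsilon}\theta$ --- rather than $\nabla\theta$ --- appears in (\ref{lp2}) that the transport term can be handled with no a priori gradient control on $\theta$, relying solely on the $L^{\infty}$ bound from Lemma \ref{lem3-3}.
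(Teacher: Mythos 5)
Your proof is correct and takes essentially the same route as the paper's: multiply (\ref{lp2}) by $u_i$, integrate by parts using the Neumann condition, control the transport term by combining the $L^{\infty}$ bound on $\theta$ from Lemma \ref{lem3-3} with the mollifier estimate (\ref{moli1}) at $p=\infty$, bound the reaction term by a constant depending only on $n$, and close with Young's and Gronwall's inequalities. The only (cosmetic) difference is that you track the factor $|\Omega|^{1/2}$ in the bound $|\nabla^{\varepsilon}\theta|_{L^{\infty}(\Omega)} \leq c_{\infty,\varepsilon}|\theta|_{L^2(\Omega)} \leq c_{\infty,\varepsilon}|\theta_0|_{L^{\infty}(\Omega)}|\Omega|^{1/2}$ explicitly, which the paper absorbs into its constant $M_{1\varepsilon}$.
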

\begin{proof}
For simplicity, we put $C_{n}^{(1)} = \max\{ R_{in}(s) | s \in {\mathbb R}, i = 1, 2, \cdots, N\}$. 
By multiplying (\ref{lp2}) by $u_i$ and integrating it, we show that 
\begin{align*}
& \frac{1}{2} \frac{d}{dt} |u_i|_{L^2(\Omega)}^2 + \kappa_i |\nabla u_i|_{L^2(\Omega)}^2 \\
=  &  \tau_i \int_{\Omega} ( \nabla^{\varepsilon}  \theta  \cdot \nabla u_i) u_i dx 
+ \int_{\Omega} R_{in}(\hat{u})  u_i dx   \\
\leq & \frac{\kappa_i}{2} |\nabla u_i|_{L^2(\Omega)}^2 
   + \frac{\tau_i^2}{2 \kappa_i} |\nabla^{\varepsilon} \theta|_{L^{\infty}(\Omega)}^2 
|u_i|_{L^2(\Omega)}^2 
 + C_n^{(1)} \int_{\Omega} |u_i| dx \mbox{ a.e. on } [0,T]. 
\end{align*}
Here, by (\ref{moli1}) and Lemma \ref{lem3-3} we have 
$ |\nabla^{\varepsilon} \theta|_{L^{\infty}(\Omega)}  \leq c_{\infty,\varepsilon} |\theta_0|_{L^{\infty}(\Omega)} := M_{1\varepsilon}$ a.e. on $[0,T]$. It is easy to see that 
\begin{align*}
 \frac{1}{2} \frac{d}{dt} |u_i|_{L^2(\Omega)}^2 + \frac{\kappa_i}{2}  |\nabla u_i|_{L^2(\Omega)}^2
\leq   
    \frac{\tau_i^2}{2 \kappa_i} M_{1\varepsilon}^2 |u_i|_{L^2(\Omega)}^2 
 + C_n^{(1)} ( |u_i|_{L^2(\Omega)}^2 + |\Omega|)   \mbox{ a.e. on } [0,T], 
\end{align*}
where $|\Omega|  = \int_{\Omega}dx$. Then, Gronwall's inequality implies the assertion of this lemma. 
\end{proof}

By combining suitably these two lemmas we are able to prove Proposition \ref{pro1}. 

\begin{proof}[Proof of Proposition \ref{pro1}]
We define the operator $\Lambda: L^2(0,T; L^2(\Omega))^{N} \to L^2(0,T; L^2(\Omega))^{N} $ 
by $\Lambda(\hat{u}) = u$ for $\hat{u}  \in L^2(0,T; L^2(\Omega))^{N}$,  
where $\{\theta, u\}$ is a solution of   LP($\hat{u}$).
 Moreover, for $M > 0$, we take 
$$ K_M(T) := \{ u \in L^2(0,T; L^2(\Omega))^{N}| 
    \sum_{i=1}^N \int_0^T |u_i|_{L^2(\Omega)}^2 dt \leq M\}. 
$$

For any $\hat{u} \in K_M(T)$, let $\Lambda(\hat{u}) = (\theta, u)$. 
By Lemma \ref{lem3-3} and Lemma \ref{lem3-4},  we obtain 
$$ 
 \sum_{i=1}^N \int_0^T |u_i|_{L^2(\Omega)}^2 dt 
\leq  C_{n, \varepsilon}^2 NT := M_2. $$
Obviously, for $M \geq M_2$ $\Lambda: K_M(T) \to K_M(T)$. 
Moreover, we multiply (\ref{lp1}) by $\theta$, and then, in a similar way as in the proof of Lemma \ref{lem3-4},  
we can show that  
\begin{align*}
& \frac{1}{2} \frac{d}{dt} |\theta|_{L^2(\Omega)}^2 + \kappa |\nabla \theta|_{L^2(\Omega)}^2 \\
= \  &  \tau \sum_{i=1}^N \int_{\Omega} ( \nabla^{\delta_0}  \hat{u}_i  \cdot \nabla \theta) \theta dx  \\
\leq \ & \frac{\kappa}{2} |\nabla \theta|_{L^2(\Omega)}^2  + 
  \frac{\tau^2 M_0^2 N}{2\kappa} \sum_{i=1}^N |\nabla^{\delta_0} \hat{u}_i|_{L^2(\Omega)}^2  \\
\leq \ & \frac{\kappa}{2} |\nabla \theta|_{L^2(\Omega)}^2  + 
  \frac{\tau^2 M_0^2 N}{2\kappa} c_{2,\delta_0}
     \sum_{i=1}^N |\hat{u}_i|_{L^2(\Omega)}^2   \quad \mbox{ a.e. on } [0,T], 
\end{align*}
where $M_0 = |\theta_0|_{L^{\infty}(\Omega)}$ and $c_{2,\delta_0}$ is the positive constant defined in (\ref{moli1}). By integrating it we can obtain a positive constant $M_3$ such that 
$$ \int_0^T |\nabla \theta|_{L^2(\Omega)}^2 dt \leq M_3 \mbox{ for any } 
  \hat{u} \in K_M(T). $$

Next, we show that we can take $T > 0$ such that $\Lambda$ is a contraction mapping. 
For $\hat{u}^{(1)}$,  $\hat{u}^{(2)} \in K_M(T)$ let 
$(\theta^{(j)}, u^{(j)}) = \Lambda(\hat{u}^{(j)})$ ($j = 1,2$), 
$\theta = \theta^{(1)} - \theta^{(2)}$, $u = u^{(1)} - u^{(2)}$ and 
$\hat{u} = \hat{u}^{(1)} - \hat{u}^{(2)}$. 
Then it holds that 
\begin{align}
& \theta_t - \kappa \Delta \theta = \tau \sum_{i=1}^N
  (\nabla^{\delta_0} \hat{u}_i^{(1)} \cdot \nabla \theta^{(1)} - 
           \nabla^{\delta_0} \hat{u}_i^{(2)} \cdot \nabla \theta^{(2)}) \mbox{ a.e. on } Q(T), 
  \label{3-7} \\
& u_{it} - \kappa_i \Delta u_i = \tau_i (\nabla^{\varepsilon } {\theta}^{(1)} \cdot \nabla u_i^{(1)} - 
           \nabla^{\varepsilon} \theta^{(2)} \cdot \nabla u_i^{(2)})  
               +R_{in}(\hat{u}^{(1)}) - R_{in}(\hat{u}^{(2)}) 
 \mbox{ a.e. on } Q(T)  \label{3-8}
\end{align}
for each $i$. By multiplying (\ref{3-7}) by $\theta$ and integrating it, we see that 
\begin{align*} 
&  \frac{1}{2} \frac{d}{dt} |\theta|_{L^2(\Omega)} + \kappa |\nabla \theta|_{L^2(\Omega)}^2 \\
= \ & \tau \sum_{i=1}^N (
    \int_{\Omega} (\nabla^{\delta_0} \hat{u}_i \cdot \nabla \theta^{(1)}) \theta dx 
+     \int_{\Omega} (\nabla^{\delta_0} \hat{u}_i^{(2)} \cdot \nabla \theta) \theta dx)  \\
\leq \ &  \tau \sum_{i=1}^N (
    |\nabla^{\delta_0} \hat{u}_i|_{L^{\infty}(\Omega)} |\nabla \theta^{(1)}|_{L^2(\Omega)} 
               |\theta|_{L^2(\Omega)} 
+  |\nabla^{\delta_0} \hat{u}_i^{(2)}|_{L^{\infty}(\Omega)} |\nabla \theta|_{L^2(\Omega)} 
     |\theta|_{L^2(\Omega)} ) \\
\leq \ &  \tau c_{2,\delta_0} \sum_{i=1}^N (
    |\hat{u}_i|_{L^{2}(\Omega)} |\nabla \theta^{(1)}|_{L^2(\Omega)} 
               |\theta|_{L^2(\Omega)} 
+  |\hat{u}_i^{(2)}|_{L^{2}(\Omega)} |\nabla \theta|_{L^2(\Omega)} 
     |\theta|_{L^2(\Omega)} ) \\
\leq \ & \frac{\tau c_{2,\delta_0}}{2}  \sum_{i=1}^N (
    |\hat{u}_i|_{L^{2}(\Omega)}^2 +  |\nabla \theta^{(1)}|_{L^2(\Omega)}^2  |\theta|_{L^2(\Omega)}^2) 
+ \frac{\kappa}{2}  |\nabla \theta|_{L^2(\Omega)}^2  \\
& + \frac{\tau^2 c_{2,\delta_0}^2}{2\kappa}   (\sum_{i=1}^N |\hat{u}_i^{(2)}|_{L^{2}(\Omega)})^2
     |\theta|_{L^2(\Omega)}^2 \quad  \mbox{ a.e. on }  [0,T]
\end{align*}
so  that 
\begin{align} 
&  \frac{1}{2} \frac{d}{dt} |\theta|_{L^2(\Omega)} + \frac{\kappa}{2} |\nabla \theta|_{L^2(\Omega)}^2 
  \nonumber \\
\leq \ & \frac{\tau c_{2,\delta_0}}{2}  \sum_{i=1}^N   |\hat{u}_i|_{L^{2}(\Omega)}^2 
 + \left( \frac{\tau c_{2,\delta_0}}{2} N  |\nabla \theta^{(1)}|_{L^2(\Omega)}^2 
   +  \frac{\tau^2 c_{2,\delta_0}^2}{2\kappa} N |\hat{u}^{(2)}|_{L^{2}(\Omega)}^2
 \right) |\theta|_{L^2(\Omega)}^2  \label{3-9}
\end{align}
a.e. on $[0,T]$. Similarly, it follows from (\ref{3-8}) that 
\begin{align} 
&  \frac{1}{2} \frac{d}{dt} |u_i|_{L^2(\Omega)} + \frac{\kappa_i}{2} |\nabla u_i|_{L^2(\Omega)}^2 
  \nonumber \\
\leq \ & \frac{\tau_i c_{2,\varepsilon}}{2}  |\theta|_{L^{2}(\Omega)}^2 
  + C_n^{(2)} \sum_{i=2}^N |\hat{u}_i|_{L^2(\Omega)}^2  \label{3-10} \\
& \  + \left( \frac{\tau_i}{2} N  |\nabla u_i^{(1)}|_{L^2(\Omega)}^2 
  +  \frac{\tau^2 c_{2,\varepsilon}^2}{2\kappa_i} |\theta^{(2)}|_{L^{2}(\Omega)}^2 
   +  C_n^{(2)}  \right) |\theta|_{L^2(\Omega)}^2 
  \mbox{ a.e. on } [0,T] \mbox{ for each } i.   \nonumber
\end{align}
By adding (\ref{3-9}) and (\ref{3-10}), we have 
\begin{align*}
\frac{d}{dt} E(t) \leq C (|\hat{u}|_{L^2(\Omega)}^2 + F(t) E(t))  \quad 
             \mbox{ for a.e. } t \in [0,T],  
\end{align*}
where $E(t) = |\theta(t)|_{L^2(\Omega)}^2 +  |u(t)|_{L^2(\Omega)}^2$ and 
$F(t) =  |\nabla \theta^{(1)}|_{L^2(\Omega)}^2 +   |\hat{u}^{(2)}|_{L^{2}(\Omega)}^2
 +  |\nabla u^{(1)}|_{L^2(\Omega)}^2 +  |\theta^{(2)}|_{L^{2}(\Omega)}^2 + 1$ for 
a.e. $t \in [0,T]$ and $C$ is a suitable positive constant. 
Then, Gronwall's inequality implies that 
$$ E(t) \leq C e^{\int_0^t F(\tau) d\tau} \int_0^t  |\hat{u}(t)|_{L^2(\Omega)}^2 d\tau
  \quad \mbox{ for } t \in [0,T]. $$

Hence, we obtain in a straightforward manner that   
$$ |u|_{L^2(0,T; L^2(\Omega))}^2 \leq C T   |\hat{u}|_{L^2(0,T; L^2(\Omega))}^2
 \quad \mbox{ for } \hat{u} \in K_M(T). 
 $$
This inequality guarantees the existence of small  $T_0 > 0$ such that 
$\Lambda : K_M(T_0) \to K_M(T_0)$ is contraction. 
Therefore, $\Lambda$ has a unique fixed point. Moreover, since the choice of $T_0$ is independent of the initial values, the assertion of this lemma is true. 
\end{proof}

\section{Uniform estimates for approximate solutions} \label{sec4}
In this section we give several  auxiliary lemmas dealing with the derivation of uniform estimates of solutions to $P_{\varepsilon,n}$.

\begin{lemm} \label{lem4-1}
Let $\varepsilon > 0$, $T> 0$ and $n > 0$,  $\theta_0 \in H^1(\Omega) \cap L_+^{\infty}(\Omega)$, 
 $u_{0i} \in H^1(\Omega) \cap L_+^{\infty}(\Omega)$ for each $i$ and
 $\{\theta_{\varepsilon n}, u_{\varepsilon n} \}$ be a solution of P$_{\varepsilon,n}$ on $[0, T]$. 
Then we have $ 0 \leq \theta_{\varepsilon n} \leq |\theta_0|_{L^{\infty}(\Omega)}$ 
a.e. on $Q(T)$ for  $\varepsilon > 0$ and $n > 0$.  
\end{lemm}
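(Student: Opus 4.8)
The plan is to establish the bound $0 \le \theta_{\varepsilon n} \le |\theta_0|_{L^\infty(\Omega)}$ for solutions of P$_{\varepsilon,n}$ by essentially reproducing the argument of Lemma \ref{lem3-3}, since the temperature equation (\ref{eq1}) in P$_{\varepsilon,n}$ is identical to the equation (\ref{lp1}) appearing in the linear problem LP($\hat u$). The key observation is that equation (\ref{eq1}) does not involve the reaction term $R_{in}$ nor the mollified gradient $\nabla^\varepsilon$ in an essential way for this estimate; the coupling enters only through the convolution $\nabla^{\delta_0} u_i$ multiplying $\nabla\theta$. So the structure is exactly the same as in the linear lemma, with $\hat u_i$ simply replaced by the genuine solution component $u_{i,\varepsilon n}$.

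First I would prove nonnegativity. I set $\tilde\theta = -\theta_{\varepsilon n}$, obtaining the analogue of (\ref{3-5}), then multiply by the truncation $[\tilde\theta]^+$ and integrate over $\Omega$. The diffusion term produces $\kappa|\nabla[\tilde\theta]^+|_{L^2(\Omega)}^2$ after integration by parts (the boundary term vanishes by the homogeneous Neumann condition (\ref{bc})), and the transport term is estimated by Cauchy--Schwarz and Young's inequality, absorbing a factor $\frac{\kappa}{2}|\nabla[\tilde\theta]^+|_{L^2(\Omega)}^2$ into the left-hand side exactly as in (\ref{3-6}). The crucial point I must address is that the constant multiplying $|[\tilde\theta]^+|_{L^2(\Omega)}^2$ on the right is integrable in time: here this follows because $|\nabla^{\delta_0} u_{i,\varepsilon n}|_{L^\infty(Q(T))}$ is finite, which is guaranteed by the smoothing estimate (\ref{moli1}) together with (S1) giving $u_{i,\varepsilon n} \in L^\infty(Q(T))$. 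Since $[\tilde\theta]^+(0) = 0$ (as $\theta_0 \ge 0$), Gronwall's inequality forces $|[\tilde\theta]^+(t)|_{L^2(\Omega)}^2 = 0$, hence $\theta_{\varepsilon n} \ge 0$ a.e.

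Next I would prove the upper bound by setting $M_0 = |\theta_0|_{L^\infty(\Omega)}$ and multiplying (\ref{eq1}) by $[\theta_{\varepsilon n} - M_0]^+$. Since the constant $M_0$ satisfies the diffusion equation trivially ($\Delta M_0 = 0$, $\nabla M_0 = 0$), subtracting it leaves the same differential structure, and the identical energy estimate yields the analogue of the displayed inequality preceding the end of Lemma \ref{lem3-3}. Because $[\theta_{\varepsilon n} - M_0]^+(0) = 0$, Gronwall again gives $|[\theta_{\varepsilon n} - M_0]^+(t)|_{L^2(\Omega)}^2 = 0$, so $\theta_{\varepsilon n} \le M_0$ a.e. on $Q(T)$.

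\emph{The main obstacle} is not the analytic estimate, which is routine, but verifying that the solution $\{\theta_{\varepsilon n}, u_{\varepsilon n}\}$ whose existence is asserted by Proposition \ref{pro1} has enough regularity to justify the truncation-and-multiply procedure. Concretely, I need $u_{i,\varepsilon n} \in L^\infty(Q(T))$ so that $\nabla^{\delta_0} u_{i,\varepsilon n} \in L^\infty(Q(T))$ via (\ref{moli1}), and I need $\theta_{\varepsilon n} \in W^{1,2}(0,T;L^2(\Omega)) \cap L^2(0,T;H^2(\Omega))$ so that multiplying the equation by $[\tilde\theta]^+$ and integrating by parts is legitimate. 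Both are provided by membership in $X(T)^{N+1}$ from (S1), so the argument closes; the lemma is in this sense an immediate corollary of Lemma \ref{lem3-3}, the only change being that the role of the frozen coefficient $\hat u_i$ is now played by the actual solution component, whose required $L^\infty$-regularity comes from the solution class rather than from an a priori hypothesis.
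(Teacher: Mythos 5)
Your proof is correct and is essentially the paper's own argument: the paper omits the proof of Lemma \ref{lem4-1} entirely, remarking only that it can be carried out ``in a similar way to the proof of Lemma \ref{lem3-3},'' which is precisely the truncation-and-Gronwall scheme you reproduce (multiply by $[-\theta_{\varepsilon n}]^+$ and by $[\theta_{\varepsilon n}-M_0]^+$, absorb the transport term via Young's inequality). Your explicit verification that $\nabla^{\delta_0}u_{i\varepsilon n}\in L^{\infty}(Q(T))^3$ follows from (S1) together with (\ref{moli1}) is exactly the point that justifies replacing the frozen coefficient $\hat u_i$ of Lemma \ref{lem3-3} by the actual solution component, so the argument closes as you claim.
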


We can prove this lemma in a similar way to the proof of Lemma \ref{lem3-3} so that we omit its proof. 
The next lemma is one of keys in the proofs of Theorems \ref{th1} and \ref{th2}. 

\begin{lemm} \label{lem4-2}
If $\varepsilon > 0$, $T> 0$,  $\theta_0 \in H^1(\Omega) \cap L_+^{\infty}(\Omega)$, 
 $u_{0i} \in H^1(\Omega) \cap L_+^{\infty}(\Omega)$ for each $i$, 
then there exist a positive constant $n_0$ and $y_i \in  L^{\infty}(0,\infty) \cap L^{2}(0,\infty) \cap W^{1,1}(0,\infty) \cap C^1([0,\infty)) $,  $i = 1, 2, \cdots, N$, 
 such that for $n  \geq n_0$ the solution  $\{\theta_{\varepsilon n}, u_{\varepsilon n} \}$ of P$_{\varepsilon,n}$ on $[0, T]$ satisfies
\begin{equation}
 0 \leq u_{i\varepsilon n} \leq y_i(t)  \quad \mbox{ a.e. on } Q(T) \mbox{ for each } i, 
\label{4-0}
\end{equation}
where $u_{\varepsilon n} = (u_{1\varepsilon n}, u_{2\varepsilon n}, \cdots, u_{N\varepsilon n})$. 
Moreover, for each $\varepsilon > 0$ P$_{\varepsilon}$ has a solution $\{\theta_{\varepsilon}, u_{\varepsilon} \}$ on $[0, \infty)$. 
 \end{lemm}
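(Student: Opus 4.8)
The plan is to obtain the two-sided bound (\ref{4-0}) by comparison with a family of spatially homogeneous functions $y_i(t)$, and then to read off solvability of P$_{\varepsilon}$ on $[0,\infty)$ from the observation that the truncation $\sigma_n$ becomes inactive once each $u_{i\varepsilon n}$ is trapped below $n$. I would first dispose of the lower bound $u_{i\varepsilon n}\ge 0$. Testing (\ref{eqam2}) with $-[u_{i\varepsilon n}]^-$ and noting that $R_{in}(u)=R_i(\sigma_n(u))\ge 0$ on $\{u_{i\varepsilon n}<0\}$ (there $\sigma_n$ kills the loss term while the production term stays nonnegative), the reaction contribution has the favourable sign; the diffusion term is nonnegative and the transport term is absorbed into $\kappa_i|\nabla[u_{i\varepsilon n}]^-|^2$ by Young's inequality, using $|\nabla^{\varepsilon}\theta_{\varepsilon n}|_{L^{\infty}(\Omega)}\le c_{\infty,\varepsilon}|\theta_0|_{L^{\infty}(\Omega)}$ from (\ref{moli1}) and Lemma \ref{lem4-1}. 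Since $[u_{0i}]^-=0$, Gronwall's inequality forces $u_{i\varepsilon n}\ge 0$, exactly as in Lemma \ref{lem3-3}.

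The heart of the matter is the upper bound, and here I expect the main obstacle to be that the Smoluchowski loss term $-\sum_l\beta_{il}u_i^+u_l^+$ is \emph{competitive}: its dependence on the other components $u_l$ has the wrong sign for a naive comparison, so one cannot simply dominate $u_{i\varepsilon n}$ by the solution of $y_i'=R_i(y)$, which would require unavailable lower bounds on the $u_l$. My way around this is to keep only the diagonal self-loss (two like particles coagulating) and to discard the off-diagonal losses, which are nonnegative and hence only help an upper bound. Concretely, I define $y_i$ inductively as the solution of the triangular, Riccati-type system
\begin{equation*}
 y_i'(t)=\frac{1}{2}\sum_{k+j=i}\beta_{kj}\,y_k(t)\,y_j(t)-\beta_{ii}\,y_i(t)^2,\qquad y_i(0)=|u_{0i}|_{L^{\infty}(\Omega)},
\end{equation*}
where for $i=1$ the production sum is empty. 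The decisive structural point is that the retained loss $-\beta_{ii}y_i^2$ depends on $y_i$ alone, so the comparison decouples while decay is still forced. Checking the properties of $y_i$ should then be routine: boundedness and $C^1$-regularity come from the Riccati damping, and integrating the $i$-th equation over $(0,t)$ gives $\beta_{ii}\int_0^t y_i^2\le y_i(0)+\tfrac12\sum_{k+j=i}\beta_{kj}\int_0^{\infty}y_k y_j<\infty$, since the source is a product of lower, already-$L^2$ components; hence $y_i\in L^2(0,\infty)$, $y_i(t)\to 0$, and $y_i'\in L^1(0,\infty)$.

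With the $y_i$ constructed I set $n_0:=\max_i\sup_{t\ge 0}y_i(t)<\infty$ and argue by induction on $i$ that $u_{i\varepsilon n}\le y_i$ for every $n\ge n_0$. Assuming $u_{k\varepsilon n}\le y_k$ for $k<i$, the production term in (\ref{eqam2}) is controlled by $\tfrac12\sum_{k+j=i}\beta_{kj}\sigma_n(u_k)\sigma_n(u_j)\le\tfrac12\sum_{k+j=i}\beta_{kj}y_ky_j$, while discarding the off-diagonal losses and retaining $-\beta_{ii}\sigma_n(u_{i\varepsilon n})^2$ shows that $w_i:=u_{i\varepsilon n}-y_i$ satisfies, a.e.\ on $Q(T)$, a differential inequality whose reaction part is at most $-\beta_{ii}\bigl(\sigma_n(u_{i\varepsilon n})^2-y_i^2\bigr)$. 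Testing with $[w_i]^+$ and using that $\sigma_n$ is nondecreasing with $y_i\le n_0\le n$ gives $\sigma_n(u_{i\varepsilon n})\ge y_i$ on $\{w_i>0\}$, so this reaction part is nonpositive there; the diffusion yields $\kappa_i|\nabla[w_i]^+|^2$ and the transport term is absorbed once more via (\ref{moli1}) and Lemma \ref{lem4-1} with a constant independent of $T$. Since $[w_i]^+(0)=[u_{0i}-|u_{0i}|_{L^{\infty}(\Omega)}]^+=0$, Gronwall's inequality delivers $[w_i]^+\equiv 0$, i.e.\ (\ref{4-0}).

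Finally, for $n\ge n_0$ the bound (\ref{4-0}) yields $0\le u_{i\varepsilon n}\le y_i\le n$, whence $\sigma_n(u_{i\varepsilon n})=u_{i\varepsilon n}$ and $R_{in}(u_{\varepsilon n})=R_i(u_{\varepsilon n})$; thus (\ref{eqam2}) reduces to (\ref{eqa2}) and $\{\theta_{\varepsilon n},u_{\varepsilon n}\}$ is in fact a solution of P$_{\varepsilon}$. Because all of the above estimates, together with those of Proposition \ref{pro1}, are independent of $T$, the local solution furnished by Proposition \ref{pro1} extends to $[0,\infty)$ without blow-up, which provides the desired solution $\{\theta_{\varepsilon},u_{\varepsilon}\}$ of P$_{\varepsilon}$ on $[0,\infty)$ and completes the proof.
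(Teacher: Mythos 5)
Your proof is correct and takes essentially the same route as the paper: nonnegativity by testing with the negative part, an inductive comparison with a triangular Riccati-type ODE system (the paper uses the slightly coarser majorant $y_i' = \tfrac{\beta_0}{2}\sum_{k<i} y_k^2 - \beta_{ii} y_i^2$ for $i>1$, while you keep the exact Smoluchowski production $\tfrac12\sum_{k+j=i}\beta_{kj}y_k y_j$, which is an immaterial difference), and then removal of the truncation once $n \ge n_0 := \max_i |y_i|_{L^{\infty}(0,\infty)}$ so that the solution of P$_{\varepsilon,n}$ solves P$_{\varepsilon}$ on every $[0,T]$. Your explicit verification that $y_i \in L^2(0,\infty)\cap W^{1,1}(0,\infty)$ by integrating the ODE, and your careful use of the monotonicity of $\sigma_n$ to get the sign of the reaction term on $\{u_{i\varepsilon n}>y_i\}$, supply details the paper leaves implicit (and states with sign typos), but the argument is the same.
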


\begin{proof}
First, we can show that $u_{i\varepsilon n} \geq 0$ a.e. on $Q(T)$ by multiplying (\ref{eqam2}) by 
$-[-u_{i\varepsilon n}]^+$ (see the proof of Lemma \ref{lem3-3}). 

Next, let $y_1$ be a solution of the following initial value problem for the ordinary differential equation:
\begin{equation}
y_1' = - \beta_{11} y_1^2 \mbox{ on } [0,\infty), y_1(0) = |u_{01}|_{L^{\infty}(\Omega)}. 
\label{ODE1}
\end{equation}
Then, from (\ref{eqam2}),  we see that 
\begin{align}
 u_{1\varepsilon n t} - y_{1t} - \kappa_1 \Delta(u_{1 \varepsilon n} - y_1)  
& = \tau_1 \nabla^{\varepsilon} \theta_{\varepsilon n} \cdot (u_{1 \varepsilon n} - y_1) 
  + R_{in} (u_{\varepsilon n} ) - \beta_{11} y_1^2  \nonumber  \\
& \leq  \tau_1 \nabla^{\varepsilon} \theta_{\varepsilon n} \cdot (u_{1 \varepsilon n} - y_1) 
  + \beta_{11} \sigma_n(u_1)^2 - \beta_{11} y_1^2  \mbox{ a.e. on } Q(T).  \label{4-2}
\end{align}
 Since $\sigma_n(u_1)^2 - \beta_{11} y_1^2 \leq 0$ a.e. on $Q(T)$ for 
$n \geq |y_1|_{L^{\infty}(0,\infty)}$, by multiplying  (\ref{4-2}) by $[u_1 -y_1]^+$, we have 
\begin{align*}
& \frac{1}{2} \frac{d}{dt} |[u_{1\varepsilon n} - y_1]^+|_{L^2(\Omega)}^2 
  \kappa_1  |\nabla [u_{1\varepsilon n} - y_1]^+|_{L^2(\Omega)}^2 \\
  \leq   & \ 
\tau_1 \int_{\Omega} (\nabla^{\varepsilon} \theta_{\varepsilon n} \cdot
\nabla [u_{1\varepsilon n} - y_1]^+)  [u_{1\varepsilon n} - y_1]^+ dx  \\
\leq  & \ \frac{\kappa_1}{2}  \int_{\Omega} |\nabla [u_{1\varepsilon n} - y_1]^+|^2 dx
+ |\nabla^{\varepsilon} \theta_{\varepsilon n}|_{L^{\infty}(Q(T))}^2
     | [u_{1\varepsilon n} - y_1]^+|_{L^2(\Omega)}^2 
         \mbox{ a.e. on } [0,T].  
\end{align*}
The Gronwall's inequality yields
\begin{equation}
u_{1\varepsilon n} \leq y_1 \quad \mbox{ a.e. on } Q(T). 
\label{4-2.5}
\end{equation}

Here, we assume that for $i = 1, 2, \cdots, i_0$ there exists 
$y_i \in  L^{\infty}(0,\infty) \cap L^{2}(0,\infty) \cap W^{1,1}(0,\infty) \cap C^1([0,\infty)) $  
satisfying that for $n \geq \max\{ |y_i|_{L^{\infty}(0,\infty)} | i = 1, 2,  \cdots, i_0\}$ 
$$ 0 \leq u_{i \varepsilon n} \leq y_i \quad \mbox{ a.e. on } Q(T). $$ 
Let $i = i_0 +1$ and $y_i$ be a solution of  
$$ y_{i}' = \frac{\beta_0}{2}  \sum_{k=1}^{i-1} y_k^2 - \beta_{ii} y_i^2 \mbox{ on } [0,\infty), y_i(0) = |u_{0i}|_{L^{\infty}(\Omega)}. $$

 Then, by choosing $n \geq \max\{ |y_i|_{L^{\infty}(0,\infty)} | i = 1, 2,  \cdots, i_0 + 1\}$ 
 it is easy to see that 
\begin{align*}
 u_{i\varepsilon n t} - y_{it} - \kappa_1 \Delta(u_{1 \varepsilon n} - y_1)  
& = \tau_1 \nabla^{\varepsilon} \theta_{\varepsilon n} \cdot (u_{1 \varepsilon n} - y_1) 
  + R_{in} (u_{\varepsilon n} ) - \beta_{ii} y_i^2  \nonumber  \\
& \leq  \tau_1 \nabla^{\varepsilon} \theta_{\varepsilon n} \cdot (u_{1 \varepsilon n} - y_1) 
  + \beta_{ii} \sigma_n(u_1)^2 - \beta_{ii} y_i^2   \nonumber  \\
& \leq  \tau_1 \nabla^{\varepsilon} \theta_{\varepsilon n} \cdot (u_{1 \varepsilon n} - y_1) 
  \quad  \mbox{ a.e. on } Q(T).  
\end{align*}
Similarly to (\ref{4-2.5}), we obtain (\ref{4-0}). 

Furthermore, by taking $n \geq   \max\{ |y_i|_{L^{\infty}(0,\infty)} | i = 1, 2,  \cdots, N\}$ 
we have 
$$  0 \leq u_{i\varepsilon n} \leq \max\{ |y_i|_{L^{\infty}(0,\infty)} | i = 1, 2,  \cdots, N\}
   \mbox{ a.e. on } Q(T) \mbox{ for any } T > 0 \mbox{ and } \varepsilon >0,  
$$
that is,  $R_{in} (u_{\varepsilon n}) =  R_{i} (u_{\varepsilon n})$ a.e on $(0,T) \times \Omega$
for $T > 0$. Thus we have proved this lemma. 
\end{proof}

\begin{lemm} \label{lem4-3}
Under the same assumption as in Lemma \ref{lem4-2} 
let $\{\theta_{\varepsilon}, u_{\varepsilon}\}$ be a solution of  P$_{\varepsilon}$
on $[0,T]$ for each $\varepsilon > 0$ and $T > 0$. 
Then there exists a positive constant $C_1$  such that
\begin{align}
&  |\theta_{\varepsilon}|_{W^{1,2}(0,T; L^2(\Omega) ) }  + 
|\theta_{\varepsilon}|_{L^{\infty}(0,T; H^1(\Omega) ) }  \leq C_1  \quad \mbox{ for } \varepsilon \in (0,1], 
       \label{4-5.1} 
  \\ 
& |u_{\varepsilon}|_{L^2(0,T; H^1(\Omega)) } \leq C_1  \quad 
  \mbox{ for } \varepsilon \in (0,1]. \label{4-6} 
\end{align}
\end{lemm}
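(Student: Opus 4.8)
The plan is to decouple the two estimates. Because the mollification in the temperature equation (\ref{eq1}) uses the \emph{fixed} parameter $\delta_0$ while the one in the concentration equation (\ref{eqa2}) uses the vanishing parameter $\varepsilon$, I would first bound $\theta_\varepsilon$ using only the a priori $L^\infty$/$L^2$ control on $u_\varepsilon$, and then bound $u_\varepsilon$ using the $\theta_\varepsilon$ estimates just obtained. Throughout I would rely on the two pointwise bounds that carry over from Lemmas \ref{lem4-1} and \ref{lem4-2} to the solution of $P_\varepsilon$, namely $0\le\theta_\varepsilon\le|\theta_0|_{L^\infty(\Omega)}$ and $0\le u_{i\varepsilon}\le y_i$ with $y_i\in L^\infty(0,\infty)\cap L^2(0,\infty)$; in particular $|u_{i\varepsilon}(t)|_{L^2(\Omega)}\le|y_i|_{L^\infty(0,\infty)}|\Omega|^{1/2}$ uniformly in $\varepsilon$ and $t$. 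The decisive structural observation is that (\ref{moli1}) applied with the fixed $\delta_0$ gives $|\nabla^{\delta_0}u_{i\varepsilon}|_{L^\infty(\Omega)}\le c_{\infty,\delta_0}|u_{i\varepsilon}|_{L^2(\Omega)}$, a bound uniform in $\varepsilon$ and $t$, whereas the analogous constant $c_{\infty,\varepsilon}$ for $\nabla^\varepsilon$ degenerates as $\varepsilon\to 0$ and must be avoided.

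For the temperature I would run two successive energy estimates on (\ref{eq1}). Testing with $\theta_\varepsilon$ and estimating the transport term by $\tau\sum_i|\nabla^{\delta_0}u_{i\varepsilon}|_{L^\infty(\Omega)}|\nabla\theta_\varepsilon|_{L^2(\Omega)}|\theta_\varepsilon|_{L^2(\Omega)}$, then absorbing half the diffusion via Young's inequality, yields an $\varepsilon$-uniform $L^2(0,T;H^1(\Omega))$ bound (the zeroth-order factors being already controlled). Testing next with $-\Delta\theta_\varepsilon$, using the Neumann condition to identify $\int_\Omega\theta_{\varepsilon t}(-\Delta\theta_\varepsilon)\,dx=\tfrac12\frac{d}{dt}|\nabla\theta_\varepsilon|_{L^2(\Omega)}^2$ and bounding the right-hand side by $\tau\sum_i|\nabla^{\delta_0}u_{i\varepsilon}|_{L^\infty(\Omega)}|\nabla\theta_\varepsilon|_{L^2(\Omega)}|\Delta\theta_\varepsilon|_{L^2(\Omega)}$, I would absorb $\tfrac{\kappa}{2}|\Delta\theta_\varepsilon|_{L^2(\Omega)}^2$ and close by Gronwall. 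This delivers the $L^\infty(0,T;H^1(\Omega))$ bound and, through (\ref{LU}), an $L^2(0,T;H^2(\Omega))$ bound. Finally, reading $\theta_{\varepsilon t}$ off (\ref{eq1}) and squaring gives $\int_0^T|\theta_{\varepsilon t}|_{L^2(\Omega)}^2\,dt\le C\int_0^T(|\Delta\theta_\varepsilon|_{L^2(\Omega)}^2+|\nabla\theta_\varepsilon|_{L^2(\Omega)}^2)\,dt$, which is the $W^{1,2}(0,T;L^2(\Omega))$ part of (\ref{4-5.1}).

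For the concentrations I would test (\ref{eqa2}) with $u_{i\varepsilon}$. The reaction term is harmless since $R_i(u_\varepsilon)$ is bounded once $u_\varepsilon$ is bounded. For the transport term the key is to \emph{not} pull $\nabla^\varepsilon\theta_\varepsilon$ out in $L^\infty$; instead I would move the factor $u_{i\varepsilon}$ out using $0\le u_{i\varepsilon}\le|y_i|_{L^\infty(0,\infty)}$, giving $\tau_i|y_i|_{L^\infty(0,\infty)}|\nabla^\varepsilon\theta_\varepsilon|_{L^2(\Omega)}|\nabla u_{i\varepsilon}|_{L^2(\Omega)}$, and then invoke (\ref{moli2}) with $q=2$ to replace $|\nabla^\varepsilon\theta_\varepsilon|_{L^2(\Omega)}$ by $c_2|\nabla\theta_\varepsilon|_{L^2(\Omega)}$ with a constant independent of $\varepsilon$. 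Young's inequality absorbs half of $\kappa_i|\nabla u_{i\varepsilon}|_{L^2(\Omega)}^2$ and leaves a term $C|\nabla\theta_\varepsilon|_{L^2(\Omega)}^2+C$; integrating in time and inserting the already established $L^2(0,T;H^1(\Omega))$ bound for $\theta_\varepsilon$ yields (\ref{4-6}).

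The main obstacle is precisely the $\varepsilon$-uniformity, and the whole argument is organized around it: every occurrence of $\nabla^\varepsilon$ must be handled through (\ref{moli2}) or (\ref{moli3}), whose constants do not depend on $\varepsilon$, and never through the $L^\infty$ bound (\ref{moli1}) in the regime where $c_{\infty,\varepsilon}$ blows up. The temperature estimate escapes the difficulty only because its transport coefficient is the fixed mollification $\nabla^{\delta_0}u_{i\varepsilon}$, and the concentration estimate escapes it only by trading the $L^\infty$ norm of $\nabla^\varepsilon\theta_\varepsilon$ for the $L^2$ norm of $\nabla\theta_\varepsilon$ at the price of using the uniform sup-bound on $u_{i\varepsilon}$. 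I would finally note that, since $y_i\in L^2(0,\infty)$, the Gronwall coefficient $\int_0^t\big(\sum_i|u_{i\varepsilon}|_{L^2(\Omega)}\big)^2\,ds$ is in fact bounded uniformly in $t$, so the same scheme can be arranged to produce constants independent of $T$ as well, which is the form needed for the subsequent large-time analysis.
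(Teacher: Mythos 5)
Your proposal is correct, and its overall architecture coincides with the paper's: both decouple the system by estimating $\theta_\varepsilon$ first (exploiting that the drift in (\ref{eq1}) uses the \emph{fixed} mollifier, so (\ref{moli1}) gives $|\nabla^{\delta_0}u_{i\varepsilon}|_{L^\infty(\Omega)}\le c_{\infty,\delta_0}|u_{i\varepsilon}|_{L^2(\Omega)}$ uniformly in $\varepsilon$), and then estimating $u_\varepsilon$ by testing (\ref{eqa2}) with $u_{i\varepsilon}$, pulling $u_{i\varepsilon}$ out in $L^\infty$ via Lemma \ref{lem4-2}, and trading $|\nabla^\varepsilon\theta_\varepsilon|_{L^2(\Omega)}$ for $c_2|\nabla\theta_\varepsilon|_{L^2(\Omega)}$ through (\ref{moli2}) with $q=2$; this second half is literally the paper's inequality (\ref{4-8}). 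The one place you diverge is the temperature estimate (\ref{4-5.1}): the paper obtains the $W^{1,2}(0,T;L^2(\Omega))$ and $L^\infty(0,T;H^1(\Omega))$ bounds in a single stroke by multiplying (\ref{eq1}) by $\theta_{\varepsilon t}$, absorbing $\tfrac12|\theta_{\varepsilon t}|_{L^2(\Omega)}^2$, and applying Gronwall to (\ref{4-7}), whereas you test with $\theta_\varepsilon$ and then with $-\Delta\theta_\varepsilon$ and recover $\theta_{\varepsilon t}$ afterwards from the equation. Your route is longer and requires the (standard, and here available, since solutions lie in $X(T)$ with Neumann boundary conditions) identity $\int_\Omega\theta_{\varepsilon t}(-\Delta\theta_\varepsilon)\,dx=\tfrac12\tfrac{d}{dt}|\nabla\theta_\varepsilon|_{L^2(\Omega)}^2$, but it buys the $L^2(0,T;H^2(\Omega))$ bound on $\theta_\varepsilon$ already at this stage, which the paper only extracts later (in Lemma \ref{lem4-4}, by reading $\Delta\theta_\varepsilon$ off the equation and invoking (\ref{LU})). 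Your closing observation that the Gronwall coefficients are integrable on $(0,\infty)$ because $y_i\in L^2(0,\infty)$, so the constants can be made independent of $T$, is also correct and is a genuine sharpening over the paper's own formulation (whose (\ref{4-7}) uses the time-constant bound $C_*$), consistent with the large-time purpose of the paper; to carry it through for (\ref{4-6}) you would also need to bound the reaction term by a time-integrable quantity, e.g. $\int_\Omega R_i(u_\varepsilon)u_{i\varepsilon}\,dx\le C|\Omega|\bigl(\sum_k y_k(t)\bigr)^2 y_i(t)$, rather than by the constant $C_RC_*|\Omega|$ used in the paper.
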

\begin{proof}
Let $u_{\varepsilon} = (u_{1\varepsilon}, \cdots, u_{N\varepsilon})$ for $\varepsilon \in (0,1]$. 

First,  we multiply (\ref{eq1}) by $\theta_{\varepsilon t}$ and integrate it over $\Omega$. 
Then  on  account of (\ref{moli1}) we see that 
\begin{align}
&   |\theta_{\varepsilon t}|_{L^2(\Omega)}^2  
+ \frac{\kappa}{2} \frac{d}{dt} |\nabla \theta_{\varepsilon}|_{L^2(\Omega)}^2  
    \nonumber \\
= & \   \tau \sum_{i=1}^N \int_{\Omega}
      ( \nabla^{\delta_0} u_{i \varepsilon} \cdot \nabla \theta_{\varepsilon}) \theta_{\varepsilon t} dx 
             \nonumber \\
\leq & \  \frac{1}{2}  |\theta_{\varepsilon t}|_{L^2(\Omega)}^2 
 + \frac{ \tau^2}{2}  \sum_{i=1}^N 
      |\nabla^{\delta_0} u_{i \varepsilon}|_{L^{\infty}(\Omega)}^2 
      | \nabla \theta_{\varepsilon}|_{L^2(\Omega)} 
             \nonumber \\
\leq & \ 
   \frac{1}{2}  |\theta_{\varepsilon t}|_{L^2(\Omega)}^2 
+ \frac{ \tau^2  c_{\infty,\delta_0}^2}{2}      \sum_{i=1}^N 
      |u_{i \varepsilon}|_{L^2(\Omega)}^2 
      | \nabla \theta_{\varepsilon}|_{L^2(\Omega)}^2    \quad \mbox{ a.e. on } [0,T] 
    \mbox{ for } \varepsilon \in (0,1].          \nonumber    
\end{align}
Obviously, we get
\begin{equation}
 \frac{1}{2} |\theta_{\varepsilon t}|_{L^2(\Omega)}^2 
+ \frac{\kappa}{2} \frac{d}{dt} |\nabla \theta_{\varepsilon}|_{L^2(\Omega)}^2  
\leq 
\frac{ \tau^2  c_{\infty,\delta_0}^2}{2}     
     C_* |\Omega| N 
      | \nabla \theta_{\varepsilon}|_{L^2(\Omega)}^2  \mbox{ a.e. on } [0,T] 
    \mbox{ for } \varepsilon \in (0,1],   \label{4-7}
\end{equation} 
where  $C_* =  \max\{ |y_i|_{L^{\infty}(0,\infty)} | i = 1, 2,  \cdots, N\}$.

By applying Grownwall's inequality to (\ref{4-7}),   (\ref{4-0}) guarantees  (\ref{4-5.1}). 

Next, we multiply (\ref{eqa2}) by $u_{i \varepsilon}$ and integrate it over $\Omega$. 
Then  for each $i$ we get 
\begin{align}
&  \frac{1}{2} \frac{d}{dt} |u_{i \varepsilon}|_{L^2(\Omega)}^2  
  + \kappa_i \int_{\Omega} |\nabla u_{i \varepsilon} |^2 dx   \nonumber \\
= & \   \tau_i \int_{\Omega}
      ( \nabla^{\varepsilon} \theta_{\varepsilon} \cdot \nabla u_{i \varepsilon}) u_{i \varepsilon} dx 
   + \int_{\Omega} R_i(u_{\varepsilon}) u_{i \varepsilon} dx  
             \nonumber \\
\leq & \   \tau_i  
      |\nabla^{\varepsilon}  \theta_{\varepsilon}|_{L^2(\Omega)} 
      | \nabla u_{i \varepsilon}|_{L^2(\Omega)}  |u_{i \varepsilon}|_{L^{\infty}(\Omega)}  
  + \int_{\Omega} C_R C_* dx              \nonumber \\
\leq & \  c_{2}  \tau_i C_* \sqrt{|\Omega|}    
      | \nabla \theta_{\varepsilon}|_{L^2(\Omega)}    | \nabla u_{i \varepsilon}|_{L^2(\Omega)} 
 + C_R C_* |\Omega|        \nonumber \\
\leq & \  \frac{\kappa_i}{2} \int_{\Omega} |\nabla u_{i \varepsilon} |^2 dx
  + \frac{ c_{2}^2  \tau_i^2 C_*^2  |\Omega|}{2 \kappa_i}     
      | \nabla \theta_{\varepsilon}|_{L^2(\Omega)}^2 
 +  C_R C_* |\Omega|      
 \mbox{ a.e. on } [0,T]   \mbox{ for } \varepsilon \in (0,1],     
             \label{4-8}
\end{align}
where $C_R = \sup\{ R_i(u_{\varepsilon}) | \varepsilon \in (0,1], i = 1, 2, \cdots, N\}$. 

By (\ref{4-8}) and  (\ref{4-5.1})  we can get (\ref{4-6}). 
\end{proof}

\vskip 12pt
The following two lemmas are concerned with the essential estimates for the proof of existence part of Theorem 
\ref{th1}.

\begin{lemm} \label{lem4-4}
If  the same assumptions as in Lemma \ref{lem4-3}  hold, 
then the set $\{\nabla \theta_{\varepsilon} | \varepsilon \in (0,1] \}$ is bounded in 
$L^4(Q(T))^3$ for any $T > 0$. 
\end{lemm}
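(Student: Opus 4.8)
The aim is to upgrade the energy bounds of Lemma \ref{lem4-3} to uniform control of $\nabla\theta_\varepsilon$ in $L^4(Q(T))$. The starting point is the observation that the bounds (\ref{4-5.1}) only place $\nabla\theta_\varepsilon$ in $L^\infty(0,T;L^2(\Omega))\cap L^2(0,T;H^1(\Omega))$, and the standard parabolic interpolation between these two classes reaches only $L^{10/3}(Q(T))$, which in three space dimensions is strictly weaker than $L^4(Q(T))$. Crossing this gap is the whole content of the lemma, and the device that makes it possible is the smoothing carried by the mollified coefficient: since (\ref{moli1}) together with the uniform bound $0\le u_{i\varepsilon}\le C_*$ coming from (\ref{4-0}) gives $|\nabla^{\delta_0}u_{i\varepsilon}|_{L^\infty(\Omega)}\le c_{\infty,\delta_0}C_*\sqrt{|\Omega|}$ (and likewise for every $x$--derivative, because $\delta_0$ is fixed), the right--hand side of (\ref{eq1}) is the product of a field bounded in $L^\infty(Q(T))$, uniformly in $\varepsilon$, with $\nabla\theta_\varepsilon$. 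Consequently the source inherits exactly the space--time integrability of $\nabla\theta_\varepsilon$, with an $\varepsilon$--independent constant, and this is what permits a bootstrap.

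First I would secure a uniform $L^2(0,T;H^2(\Omega))$ bound for $\theta_\varepsilon$. Rewriting (\ref{eq1}) as $\kappa\Delta\theta_\varepsilon=\theta_{\varepsilon t}-g_\varepsilon$ with $g_\varepsilon:=\tau\sum_{i=1}^N\nabla^{\delta_0}u_{i\varepsilon}\cdot\nabla\theta_\varepsilon$, the term $\theta_{\varepsilon t}$ is bounded in $L^2(Q(T))$ by (\ref{4-5.1}), while $g_\varepsilon$ is bounded in $L^\infty(0,T;L^2(\Omega))\subset L^2(Q(T))$ by the $L^\infty$ bound on the coefficient and the $L^\infty(0,T;L^2)$ bound on $\nabla\theta_\varepsilon$. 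Hence $\Delta\theta_\varepsilon$ is bounded in $L^2(Q(T))$, and the elliptic estimate (\ref{LU}), combined with (\ref{4-5.1}), yields a uniform bound for $\theta_\varepsilon$ in $L^2(0,T;H^2(\Omega))$. In particular $\nabla\theta_\varepsilon$ is bounded in $L^2(0,T;H^1(\Omega))\hookrightarrow L^2(0,T;L^6(\Omega))$; interpolating this with the $L^\infty(0,T;L^2(\Omega))$ bound from (\ref{4-5.1}) gives the uniform $L^{10/3}(Q(T))$ bound recorded above, and therefore $g_\varepsilon$ is bounded in $L^{10/3}(Q(T))$ as well.

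The decisive step is then a single bootstrap. Applying higher--integrability parabolic regularity for the Neumann heat operator (with exponent $p=10/3$) to (\ref{eq1}), the source bound $g_\varepsilon\in L^{10/3}(Q(T))$ promotes $\theta_\varepsilon$ to a uniform bound in $L^{10/3}(0,T;W^{2,10/3}(\Omega))$, so that $\nabla\theta_\varepsilon$ is controlled in $L^{10/3}(0,T;W^{1,10/3}(\Omega))\hookrightarrow L^{10/3}(0,T;L^\infty(\Omega))$, the Sobolev embedding being valid since $10/3>3$. Interpolating this with the $L^\infty(0,T;L^2(\Omega))$ bound through $|\nabla\theta_\varepsilon|_{L^4(\Omega)}^4\le|\nabla\theta_\varepsilon|_{L^2(\Omega)}^2\,|\nabla\theta_\varepsilon|_{L^\infty(\Omega)}^2$ and integrating in time (using $2<10/3$ and Hölder over the finite interval) yields the claimed uniform bound of $\nabla\theta_\varepsilon$ in $L^4(Q(T))$. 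Throughout, every constant depends only on the data, on $C_*$ from Lemma \ref{lem4-2} and on $C_1$ from Lemma \ref{lem4-3}, hence not on $\varepsilon$.

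The main obstacle is exactly this passage beyond the $L^{10/3}$ threshold: the bare energy class cannot reach $L^4$, and one must genuinely exploit the regularization, namely the uniform spatial smoothness of $\nabla^{\delta_0}u_{i\varepsilon}$, to let the source keep step with $\nabla\theta_\varepsilon$ while bootstrapping. The delicate technical point is to run the higher--integrability parabolic estimate with only $H^1$ initial data: near $t=0$ one cannot expect full $W^{2,10/3}$ smoothing from $\theta_0$, so I would split $\theta_\varepsilon$ into the free evolution of $\theta_0$ and the Duhamel contribution of $g_\varepsilon$, control the latter by maximal regularity as above, and absorb the former by its own initial--layer estimate, verifying that the resulting time singularity remains integrable to the fourth power. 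This separation, rather than the algebra of the interpolations, is where the care is required.
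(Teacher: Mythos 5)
Your first step is fine and is exactly what the paper also needs: writing $\kappa\Delta\theta_\varepsilon=\theta_{\varepsilon t}-g_\varepsilon$ with $g_\varepsilon=\tau\sum_i\nabla^{\delta_0}u_{i\varepsilon}\cdot\nabla\theta_\varepsilon$, using (\ref{4-5.1}), the uniform bound $|\nabla^{\delta_0}u_{i\varepsilon}|_{L^\infty}\le c_{\infty,\delta_0}|u_{i\varepsilon}|_{L^2}$ from (\ref{moli1}) and Lemma \ref{lem4-2}, and the elliptic estimate (\ref{LU}), one gets $\theta_\varepsilon$ bounded in $L^2(0,T;H^2(\Omega))$ uniformly in $\varepsilon$. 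The gap is in the step you yourself flag as delicate: the initial layer. Maximal $L^{10/3}$ regularity does apply to the Duhamel part (zero initial data), but the free evolution $v(t)=e^{\kappa t\Delta}\theta_0$ of data that are merely in $H^1(\Omega)\cap L^\infty(\Omega)$ cannot be handled by the pointwise-in-time singularity estimates you propose to "verify". The available smoothing rates are $|v(t)|_{H^2}\le Ct^{-1/2}|\theta_0|_{H^1}$ and $|\nabla v(t)|_{L^\infty}\le Ct^{-1/2}|\theta_0|_{L^\infty}$, and every combination of them (Gagliardo--Nirenberg, or $L^2$--$L^\infty$ interpolation of the gradient) yields at best $|\nabla v(t)|_{L^4(\Omega)}^4\le Ct^{-1}$, which is \emph{not} integrable near $t=0$; the cruder $H^1\to H^{7/4}$ estimate gives $t^{-3/2}$, which is worse. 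So the verification you defer would fail. What saves the free part is not a pointwise rate but the integrated energy identity $\kappa\int_0^T|\Delta v|_{L^2}^2\,dt\le\tfrac12|\nabla\theta_0|_{L^2}^2$ together with the interpolation inequality $|\nabla f|_{L^4}^2\le C|f|_{L^\infty}|f|_{H^2}$ (alternatively, one can invoke the Besov trace-space characterization and the embedding $H^1\cap L^\infty\hookrightarrow B^{1/2}_{4,4}$, but that is heavy machinery and delicate on a bounded domain with Neumann conditions). Once you concede these two ingredients, you can apply them directly to $\theta_\varepsilon$ itself -- which is already known to be bounded in $L^\infty(Q(T))$ and in $L^2(0,T;H^2(\Omega))$ -- and the entire maximal-regularity bootstrap becomes superfluous.

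That direct route is precisely the paper's proof, and it is considerably more elementary. The paper multiplies (\ref{eq1}) by $\theta_\varepsilon|\nabla\theta_\varepsilon|^2$ and integrates by parts, obtaining $\kappa\int_\Omega|\nabla\theta_\varepsilon|^4dx=I_1+I_2+I_3$, where $I_1$ involves $\theta_{\varepsilon t}$, $I_2$ the second spatial derivatives, and $I_3$ the transport term; each is absorbed by Young's inequality using the maximum principle $0\le\theta_\varepsilon\le|\theta_0|_{L^\infty}$ of Lemma \ref{lem4-1} and (\ref{moli1}), leaving $|\nabla\theta_\varepsilon|_{L^4(\Omega)}^4\le C\bigl(|\theta_{\varepsilon t}|_{L^2}^2+|\theta_\varepsilon|_{H^2}^2+|u_{i\varepsilon}|_{L^2}^4\bigr)$, whose time integral is finite by Lemmas \ref{lem4-2} and \ref{lem4-3}. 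In effect the paper proves the needed Gagliardo--Nirenberg inequality by hand, for the solution itself, with no splitting and no semigroup theory. Note also a misattribution in your opening: the mollified coefficient $\nabla^{\delta_0}u_{i\varepsilon}$ does supply a uniform $L^\infty$ bound on the drift (both proofs use this), but the ingredient that actually carries you past the $L^{10/3}$ interpolation threshold is the $L^\infty$ bound on $\theta_\varepsilon$ itself from Lemma \ref{lem4-1}, which your proposal never invokes.
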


\begin{proof}
For $\varepsilon \in (0,1]$ and $T > 0$  let $\{\theta_{\varepsilon}, u_{\varepsilon}\}$ be a solution of 
 P$_{\varepsilon}$ on $[0,T]$,  and $u_{\varepsilon} = (u_{1\varepsilon}, \cdots, u_{N\varepsilon})$. 
Because of  $\theta_{\varepsilon}(t) \in H^2(\Omega)$ for a.e. $t \in [0,T]$
 Soblolev's embedding theorem implies that 
$\nabla \theta_{\varepsilon}(t) \in L^6(\Omega)^3$ for a.e. $t \in [0,T]$.  
Then we can multiply $\theta_{\varepsilon} |\nabla \theta_{\varepsilon}|^2$ to (\ref{eq1}) and get 
\begin{align}
& \int_{\Omega} \theta_{\varepsilon t } (\theta_{\varepsilon} |\nabla \theta_{\varepsilon}|^2) dx 
- \kappa \int_{\Omega} \Delta \theta_{\varepsilon} (\theta_{\varepsilon} |\nabla \theta_{\varepsilon}|^2) dx  \nonumber  \\
~= & \ \tau  \sum_{i=1}^N
\int_{\Omega} (\nabla^{\delta_0} u_{i \varepsilon} \cdot \nabla \theta_{\varepsilon})  
\theta_{\varepsilon} |\nabla \theta_{\varepsilon}|^2 dx  \quad \mbox{ a.e.  on }  [0,T]. \nonumber 
\end{align} 
We note that 
\begin{align}
 - \kappa \int_{\Omega} \Delta \theta_{\varepsilon} (\theta_{\varepsilon} |\nabla \theta_{\varepsilon}|^2) dx  \nonumber  
~= &   \kappa \int_{\Omega} \nabla \theta_{\varepsilon}  \cdot \nabla (\theta_{\varepsilon} |\nabla \theta_{\varepsilon}|^2) dx  \nonumber  \\
= &   \kappa \int_{\Omega} |\nabla \theta_{\varepsilon}|^4 dx 
+ 2  \kappa \int_{\Omega} \theta_{\varepsilon}  \sum_{i,j = 1}^3
 \frac{\partial^2  \theta_{\varepsilon}}{\partial x_i \partial x_j} 
 \frac{\partial  \theta_{\varepsilon}}{\partial x_i} 
 \frac{\partial  \theta_{\varepsilon}}{\partial x_j} dx  \nonumber  
\end{align} 
so that 
\begin{align}
 &  \  \kappa \int_{\Omega} |\nabla \theta_{\varepsilon}|^4 dx  \nonumber   \\
= & -  \int_{\Omega} \theta_{\varepsilon t } (\theta_{\varepsilon} |\nabla \theta_{\varepsilon}|^2) dx
 -2  \kappa \int_{\Omega} \theta_{\varepsilon}  \sum_{i,j = 1}^3
 \frac{\partial^2  \theta_{\varepsilon}}{\partial x_i \partial x_j} 
 \frac{\partial  \theta_{\varepsilon}}{\partial x_i} 
 \frac{\partial  \theta_{\varepsilon}}{\partial x_j} dx  \nonumber \\
& \ +  \tau  \sum_{i=1}^N
\int_{\Omega} (\nabla^{\delta_0} u_{i \varepsilon} \cdot \nabla \theta_{\varepsilon})  
\theta_{\varepsilon} |\nabla \theta_{\varepsilon}|^2 dx \ \ 
( =: I_1  + I_2  + I_3 )  \quad \mbox{ a.e.  on }  [0,T].  \label{4-9} 
\end{align} 
It is easy to obtain 
\begin{align}
I_1  & \leq |\theta_0|_{L^{\infty}(\Omega)} |\theta_{\varepsilon t}|_{L^2(\Omega)} 
                    |\nabla \theta_{\varepsilon}|_{L^4(\Omega)}^2  \nonumber  \\                    
 & \leq \frac{\kappa}{2}  |\nabla \theta_{\varepsilon}|_{L^4(\Omega)}^4
  + \frac{1}{2 \kappa}  |\theta_0|_{L^{\infty}(\Omega)}^2  |\theta_{\varepsilon t}|_{L^2(\Omega)}^2,   
               \label{4-10} 
\end{align} 
\begin{align}
I_2  & \leq  2 \kappa |\theta_0|_{L^{\infty}(\Omega)}  \sum_{i,j = 1}^3
        \int_{\Omega}  |\frac{\partial^2  \theta_{\varepsilon}}{\partial x_i \partial x_j}| 
 |\nabla  \theta_{\varepsilon}|^2   dx \nonumber \\
   & \leq  \frac{\kappa}{4}  |\nabla \theta_{\varepsilon}|_{L^4(\Omega)}^4 
 + 72 \kappa |\theta_0|_{L^{\infty}(\Omega)}^2  \sum_{i,j = 1}^3
        \int_{\Omega}  |\frac{\partial^2  \theta_{\varepsilon}}{\partial x_i \partial x_j}|^2  dx, 
    \label{4-11}
\end{align} 
\begin{align}
I_3  & \leq  \tau |\theta_0|_{L^{\infty}(\Omega)}  \sum_{i = 1}^N
        \int_{\Omega}  |\nabla^{\delta_0} u_{i \varepsilon}| |\nabla \theta_{\varepsilon}|^3 dx 
 \nonumber \\
&  \leq  \frac{\kappa}{4}  |\nabla \theta_{\varepsilon}|_{L^4(\Omega)}^4
 + C_{\kappa} |\theta_0|_{L^{\infty}(\Omega)}^4  \int_{\Omega}  |\nabla^{\delta_0} u_{i \varepsilon}|^4 dx
\nonumber \\
&  \leq  \frac{\kappa}{4}  |\nabla \theta_{\varepsilon}|_{L^4(\Omega)}^4
 + C_{\kappa} c_{4,\delta_0} |\theta_0|_{L^{\infty}(\Omega)}^4 
	           |u_{i \varepsilon}|_{L^2(\Omega)}^4  \quad \mbox{ a.e.  on }  [0,T], 
    \label{4-12}
\end{align} 
where $C_{\kappa}$ is a positive constant depending only on $\kappa$. 
From (\ref{4-9}) $\sim$ (\ref{4-12}) it follows that 
\begin{align}
 \frac{\kappa}{8} \int_{\Omega} |\nabla \theta_{\varepsilon}|^4 dx 
\leq &  \frac{1}{2\kappa}   |\theta_0|_{L^{\infty}(\Omega)}^2  |\theta_{\varepsilon t}|_{L^2(\Omega)}^2
 + 72 \kappa |\theta_0|_{L^{\infty}(\Omega)}^2  |\theta_{\varepsilon}|_{H^2(\Omega)}^2  \nonumber \\
 & +   C_{\kappa} c_{4,\delta_0} |\theta_0|_{L^{\infty}(\Omega)}^4 
	           |u_{i \varepsilon}|_{L^2(\Omega)}^4  \quad \mbox{ a.e.  on }  [0,T]. \nonumber
\end{align} 
Hence, thanks to Lemmas \ref{lem4-2} and \ref{lem4-3}  we have proved the conclusion of this lemma. 
\end{proof}

\begin{lemm} \label{lem4-5}
If  the same assumptions as in Lemma \ref{lem4-3}  hold, 
then the set $\{u_{\varepsilon} | \varepsilon \in (0,1] \}$ is bounded in 
$W^{1,2}(0,T; L^2(\Omega)^N)$, $L^{\infty}(0,T; H^1(\Omega)^N)$ and $L^{2}(0,T; H^2(\Omega)^N)$,
and  $\{ \nabla \theta_{\varepsilon} | \varepsilon \in (0,1] \}$ is bounded in
 $L^4(Q(T))^3$ for any $T > 0$. 
\end{lemm}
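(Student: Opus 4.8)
The final assertion on $\nabla\theta_\varepsilon$ is exactly the content of Lemma \ref{lem4-4}, so only the three bounds on $u_\varepsilon$ require proof. The plan is to derive them from a single differential inequality obtained by combining three energy identities for each component $u_{i\varepsilon}$: testing (\ref{eqa2}) successively with $u_{i\varepsilon t}$, with $-\Delta u_{i\varepsilon}$, and with $u_{i\varepsilon}|\nabla u_{i\varepsilon}|^2$. Using the homogeneous Neumann condition (\ref{bc}) to integrate by parts, the diffusion terms produce, respectively, $|u_{i\varepsilon t}|_{L^2(\Omega)}^2$ together with $\frac{\kappa_i}{2}\frac{d}{dt}|\nabla u_{i\varepsilon}|_{L^2(\Omega)}^2$, the quantity $\kappa_i|\Delta u_{i\varepsilon}|_{L^2(\Omega)}^2$, and the key coercive quantity $\kappa_i|\nabla u_{i\varepsilon}|_{L^4(\Omega)}^4$ (the latter exactly as in the computation leading to (\ref{4-9})). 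The reaction terms are harmless: by the uniform $L^\infty$ bound (\ref{4-0}) of Lemma \ref{lem4-2}, $R_i(u_\varepsilon)$ is bounded in $L^\infty(Q(T))$ uniformly in $\varepsilon$, so each reaction contribution is absorbed after a Young inequality.

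The only genuinely delicate terms are those involving the transport product $\nabla^\varepsilon\theta_\varepsilon\cdot\nabla u_{i\varepsilon}$. In every case I would estimate it by Hölder's inequality followed by (\ref{moli2}), which replaces $\nabla^\varepsilon\theta_\varepsilon$ by $\nabla\theta_\varepsilon$ at the cost of the constant $c_4$, and then by Young's inequality. In the $u_{i\varepsilon t}$- and $(-\Delta u_{i\varepsilon})$-tests this produces $\int_\Omega|\nabla^\varepsilon\theta_\varepsilon|^2|\nabla u_{i\varepsilon}|^2\,dx\le c_4^2|\nabla\theta_\varepsilon|_{L^4(\Omega)}^2|\nabla u_{i\varepsilon}|_{L^4(\Omega)}^2\le\eta|\nabla u_{i\varepsilon}|_{L^4(\Omega)}^4+C_\eta|\nabla\theta_\varepsilon|_{L^4(\Omega)}^4$, while in the $u_{i\varepsilon}|\nabla u_{i\varepsilon}|^2$-test I would use (\ref{4-0}) to pull out the uniform bound $C_*$ and write $\tau_i\int_\Omega|\nabla^\varepsilon\theta_\varepsilon||\nabla u_{i\varepsilon}|^3\,dx\le\tau_iC_*|\nabla^\varepsilon\theta_\varepsilon|_{L^4(\Omega)}|\nabla u_{i\varepsilon}|_{L^4(\Omega)}^3\le\eta|\nabla u_{i\varepsilon}|_{L^4(\Omega)}^4+C|\nabla\theta_\varepsilon|_{L^4(\Omega)}^4$. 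The point is that in each case the residual forcing is the single quantity $|\nabla\theta_\varepsilon|_{L^4(\Omega)}^4$, which by Lemma \ref{lem4-4} is bounded in $L^1(0,T)$ uniformly in $\varepsilon$. The two cross terms arising in the third identity are handled in the same spirit: the time-derivative term $\int_\Omega u_{i\varepsilon t}u_{i\varepsilon}|\nabla u_{i\varepsilon}|^2\,dx$ is bounded, via (\ref{4-0}) and Young, by $\eta|\nabla u_{i\varepsilon}|_{L^4(\Omega)}^4+C|u_{i\varepsilon t}|_{L^2(\Omega)}^2$, and the second-order term $\int_\Omega u_{i\varepsilon}\sum_{j,k=1}^3\frac{\partial^2 u_{i\varepsilon}}{\partial x_j\partial x_k}\frac{\partial u_{i\varepsilon}}{\partial x_j}\frac{\partial u_{i\varepsilon}}{\partial x_k}\,dx$ is treated as $I_2$ was in Lemma \ref{lem4-4}, namely by $\eta|\nabla u_{i\varepsilon}|_{L^4(\Omega)}^4+C|u_{i\varepsilon}|_{H^2(\Omega)}^2$, with $|u_{i\varepsilon}|_{H^2(\Omega)}$ controlled by $|\Delta u_{i\varepsilon}|_{L^2(\Omega)}+|u_{i\varepsilon}|_{H^1(\Omega)}$ through the elliptic estimate (\ref{LU}).

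Adding a suitable multiple of the three identities and choosing $\eta$ small enough, the terms $|\nabla u_{i\varepsilon}|_{L^4(\Omega)}^4$, $|u_{i\varepsilon t}|_{L^2(\Omega)}^2$ and $|\Delta u_{i\varepsilon}|_{L^2(\Omega)}^2$ that appear on the right-hand sides can all be absorbed into the corresponding coercive terms on the left, leaving an inequality of the form $\frac{d}{dt}|\nabla u_{i\varepsilon}|_{L^2(\Omega)}^2+c\bigl(|u_{i\varepsilon t}|_{L^2(\Omega)}^2+|\Delta u_{i\varepsilon}|_{L^2(\Omega)}^2\bigr)\le C|\nabla u_{i\varepsilon}|_{L^2(\Omega)}^2+C|\nabla\theta_\varepsilon|_{L^4(\Omega)}^4+C$, where I have also used $|u_{i\varepsilon}|_{L^2(\Omega)}\le C$, again from (\ref{4-0}). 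Since the coefficient of $|\nabla u_{i\varepsilon}|_{L^2(\Omega)}^2$ is constant and the forcing $C|\nabla\theta_\varepsilon|_{L^4(\Omega)}^4+C$ lies in $L^1(0,T)$ uniformly in $\varepsilon$ by Lemma \ref{lem4-4}, Gronwall's inequality yields the uniform bound in $L^\infty(0,T;H^1(\Omega)^N)$; integrating the remaining coercive terms over $[0,T]$ then gives the uniform bounds in $W^{1,2}(0,T;L^2(\Omega)^N)$ and, via (\ref{LU}), in $L^2(0,T;H^2(\Omega)^N)$.

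The main obstacle is the apparent circularity in three space dimensions: controlling the transport term in the $u_{i\varepsilon t}$- and $(-\Delta u_{i\varepsilon})$-tests seems to require $\nabla u_{i\varepsilon}\in L^4$, whereas the $L^4$-gradient estimate in turn requires $u_{i\varepsilon t},\Delta u_{i\varepsilon}\in L^2$. Treating the three identities in isolation and reducing $|\nabla u_{i\varepsilon}|_{L^4(\Omega)}^2$ through Gagliardo--Nirenberg and (\ref{LU}) would force a Gronwall coefficient of the form $|\nabla\theta_\varepsilon|_{L^4(\Omega)}^8$, which is not known to be integrable in time. It is precisely the simultaneous absorption of $|\nabla u_{i\varepsilon}|_{L^4(\Omega)}^4$ across all three identities — made possible by the uniform $L^\infty$ bound (\ref{4-0}) on $u_{i\varepsilon}$ and by the $L^4(Q(T))$ bound on $\nabla\theta_\varepsilon$ from Lemma \ref{lem4-4} — that keeps the residual forcing at the integrable level $|\nabla\theta_\varepsilon|_{L^4(\Omega)}^4$ and closes the estimate.
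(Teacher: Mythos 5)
Your proposal is correct and follows the paper's own proof essentially step for step: the same three multipliers $u_{i\varepsilon t}$, $-\Delta u_{i\varepsilon}$ and $u_{i\varepsilon}|\nabla u_{i\varepsilon}|^2$, the same treatment of the transport terms via (\ref{moli2}) and Young's inequality so that the only forcing is $|\nabla \theta_{\varepsilon}|_{L^4(\Omega)}^4$, the same simultaneous absorption of $|\nabla u_{i\varepsilon}|_{L^4(\Omega)}^4$, $|u_{i\varepsilon t}|_{L^2(\Omega)}^2$ and $|\Delta u_{i\varepsilon}|_{L^2(\Omega)}^2$ after choosing $\eta$ small with the help of (\ref{LU}) and the uniform bound $C_*$ from Lemma \ref{lem4-2}, and the same concluding Gronwall/integration step resting on the $L^4(Q(T))$ bound of Lemma \ref{lem4-4}. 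No gaps; the argument matches the paper's.
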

\begin{proof}
For $\varepsilon \in (0,1]$ and $T > 0$  let $\{\theta_{\varepsilon}, u_{\varepsilon}\}$ be a solution of 
 P$_{\varepsilon}$ on $[0,T]$ and $u_{\varepsilon} = (u_{1\varepsilon}, \cdots, u_{N\varepsilon})$. 
First, we multiply (\ref{eqa2}) by $u_{i \varepsilon t}$ and then get 
\begin{align}
&   |u_{i \varepsilon t }|_{L^2(\Omega)}^2 + 
         \frac{\kappa_i}{2} \frac{d}{dt} |\nabla u_{i\varepsilon}|_{L^2(\Omega)}^2   \nonumber  \\
 = & \ \tau_i \int_{\Omega} (\nabla^{\varepsilon} \theta_{\varepsilon} \cdot \nabla u_{i \varepsilon})  
u_{i \varepsilon t}  dx  
 + \int_{\Omega} R_i(u_{\varepsilon}) u_{i \varepsilon t}  dx   \nonumber  \\
\leq & \frac{1}{2}  |u_{i \varepsilon t }|_{L^2(\Omega)}^2 + 
  \tau_i^2 \int_{\Omega} |\nabla^{\varepsilon} \theta_{\varepsilon}|^2 |\nabla u_{i \varepsilon}|^2  
  dx 
+   \int_{\Omega} |R_i(u_{\varepsilon})|^2  dx  
\quad \mbox{ a.e.  on }  [0,T] \mbox{ for } i. \nonumber 
\end{align} 
Here, for  an arbitrary positive number $\eta$ we can easily get  
\begin{align}
&  \frac{1}{2}  |u_{i \varepsilon t }|_{L^2(\Omega)}^2 + 
         \frac{\kappa_i}{2} \frac{d}{dt} |\nabla u_{i\varepsilon}|_{L^2(\Omega)}^2   \nonumber  \\
\leq & \ \eta    |\nabla u_{i \varepsilon}|_{L^4(\Omega)}^4 
 + C_{\eta} \tau_i^4  |\nabla^{\varepsilon} \theta_{\varepsilon}|_{L^4(\Omega)}^4 
+  C_R^2 |\Omega|  \nonumber  \\
\leq & \ \eta    |\nabla u_{i \varepsilon}|_{L^4(\Omega)}^4 
 + C_{\eta}  c_2^4 \tau_i^4  |\nabla \theta_{\varepsilon}|_{L^4(\Omega)}^4 
+  C_R^2 |\Omega|   \quad \mbox{ a.e.  on }  [0,T] \mbox{ for } i,  \label{4-13} 
\end{align} 
where $C_{\eta}$ is a positive constant depending only on $\eta$ and $C_R$ is already defined in the proof of Lemma \ref{lem4-3}. 

Next, by multiplying  (\ref{eqa2}) by $- \Delta u_{i \varepsilon}$  we can see that 
\begin{align}
&  \frac{1}{2} \frac{d}{dt}   |\nabla u_{i \varepsilon}|_{L^2(\Omega)}^2 + 
       \kappa_i  |\Delta u_{i\varepsilon}|_{L^2(\Omega)}^2   \nonumber  \\
 \leq & \ \tau_i \int_{\Omega} |\nabla^{\varepsilon} \theta_{\varepsilon} | |\nabla u_{i \varepsilon} | 
  |\Delta u_{i \varepsilon}|   
 + \int_{\Omega} |R_i(u_{\varepsilon})|  |\Delta u_{i \varepsilon}|  dx   \nonumber  \\
 \leq & \frac{\kappa_i}{2}  |\Delta u_{i \varepsilon}|_{L^2(\Omega)}^2 
+ \frac{\tau^2}{\kappa_i} \int_{\Omega}  |\nabla^{\varepsilon} \theta_{\varepsilon}|^2 
 |\nabla u_{i \varepsilon} |^2 dx 
+ \frac{C_R^2}{\kappa_i} |\Omega|   \quad \mbox{ a.e.  on }  [0,T] \mbox{ for } i. \nonumber 
\end{align} 
Similarly to (\ref{4-13}), we have 
\begin{align}
&  \frac{1}{2} \frac{d}{dt}   |\nabla u_{i \varepsilon}|_{L^2(\Omega)}^2 + 
      \frac{\kappa_i}{2}  |\Delta u_{i\varepsilon}|_{L^2(\Omega)}^2   \nonumber  \\
  \leq & \eta  |\nabla u_{i \varepsilon}|_{L^4(\Omega)}^4
  +  \frac{\tau^4 c_2^2 C_{\eta}}{\kappa_i^2} \int_{\Omega} 
   |\nabla \theta_{\varepsilon}|^4 dx 
+ \frac{C_R^2}{\kappa_i} |\Omega|   
\quad \mbox{ a.e.  on }  [0,T] \mbox{ for } i. \label{4-14} 
\end{align} 

Moreover, we multiply (\ref{eqa2}) by $u_{i\varepsilon} |\nabla u_{i\varepsilon}|^2$ and 
in the similar way to that of (\ref{4-9}) we observe that 
\begin{align}
& \kappa_i \int_{\Omega} |\nabla u_{i\varepsilon}|^4 dx   
\nonumber  \\
= & \ - \int_{\Omega}  u_{i\varepsilon t} u_{i\varepsilon}  |\nabla u_{i\varepsilon}|^2 dx 
 - \kappa_i \int_{\Omega}  u_{i\varepsilon}  \nabla u_{i\varepsilon} \cdot
              \nabla( |\nabla u_{i\varepsilon}|^2)  dx  + \tau_i \int_{\Omega} (\nabla^{\varepsilon} \theta_{\varepsilon} \cdot \nabla  u_{i\varepsilon})
            ( u_{i\varepsilon}  |\nabla u_{i\varepsilon}|^2)  dx  \nonumber  \\
&  +  \int_{\Omega}  R_i(u_{\varepsilon})  u_{i\varepsilon}  |\nabla u_{i\varepsilon}|^2  dx \ \ 
(=: J_1 +J_2 + J_3 + J_4) 
\quad \mbox{ a.e.  on }  [0,T] \mbox{ for } i.  \label{4-14a}
\end{align} 
By elementary calculations we infer that 
\begin{align}
J_1 & \leq |u_{i\varepsilon}|_{L^{\infty}(\Omega)} \int_{\Omega} | u_{i\varepsilon t}| 
               |\nabla u_{i\varepsilon}|^2dx  \nonumber  \\
&  \leq \frac{\kappa_i}{4}  |\nabla u_{i\varepsilon}|_{L^4(\Omega)}^4 
  + \frac{C_*^2}{\kappa_i}  |u_{i\varepsilon t}|_{L^{2}(\Omega)}^2,   \label{4-15}
\end{align} 

\begin{align}
J_2 & \leq  \kappa_i |u_{i\varepsilon}|_{L^{\infty}(\Omega)} \int_{\Omega} |\nabla u_{i\varepsilon t}| 
               |\nabla (|\nabla u_{i\varepsilon}|^2)| dx  \nonumber  \\
 & \leq 2\kappa_i C_*
   \int_{\Omega} |\nabla u_{i\varepsilon}|^2   |\sum_{i, j=1}^3 
 \frac{\partial^2 u_{i\varepsilon}}{\partial x_i \partial x_j} |^2 dx  \nonumber  \\
&  \leq \frac{\kappa_i}{4}  |\nabla u_{i\varepsilon}|_{L^4(\Omega)}^4 
+ 36 \kappa_i C_*^2 |u_{i\varepsilon}|_{H^2(\Omega)}^2,  \label{4-16}
\end{align} 
\begin{align}
J_3 & \leq  \tau_i |u_{i\varepsilon}|_{L^{\infty}(\Omega)} 
   \int_{\Omega} |\nabla^{\varepsilon}  \theta_{i\varepsilon}| |\nabla u_{i\varepsilon}|^3 dx  
              \nonumber  \\
&  \leq \frac{\kappa_i}{8}  |\nabla u_{i\varepsilon}|_{L^4(\Omega)}^4 
  + C_*^4 C_{\kappa_i, \tau_i}  |\nabla^{\varepsilon} \theta_{\varepsilon}|_{L^{4}(\Omega)}^4
 \nonumber  \\
&  \leq \frac{\kappa_i}{8}  |\nabla u_{i\varepsilon}|_{L^4(\Omega)}^4 
  + c_4 C_*^4 C_{\kappa_i, \tau_i}  |\nabla \theta_{\varepsilon}|_{L^{4}(\Omega)}^4,   \label{4-17}
\end{align} 

\begin{align}
J_4 & \leq |u_{i\varepsilon}|_{L^{\infty}(\Omega)} \int_{\Omega} |R_i(u_{\varepsilon})|  
               |\nabla u_{i\varepsilon}|^2dx  \nonumber  \\
&  \leq \frac{\kappa_i}{8}  |\nabla u_{i\varepsilon}|_{L^4(\Omega)}^4 
  + \frac{2}{\kappa_i}  C_*^2 C_R^2 |\Omega|
\quad \mbox{ a.e.  on }  [0,T] \mbox{ for } i,   \label{4-18}
\end{align} 
where $C_{\kappa_i, \tau_i}$ is a positive constant depending only on $\kappa_i$ and $\tau_i$ for each $i$ and  $C_*$  is defined in (\ref{4-7}).  From (\ref{4-14a}) $\sim$ (\ref{4-18}) it follows that 
 \begin{align}
& \frac{\kappa_i}{4}  \int_{\Omega} |\nabla u_{i\varepsilon}|^4 dx   \nonumber \\
\leq & \frac{C_*^2}{\kappa_i}  |u_{i\varepsilon t}|_{L^{2}(\Omega)}^2 
+  36 \kappa_i C_*^2 |u_{i\varepsilon}|_{H^2(\Omega)}^2
+ c_4 C_*^4 C_{\kappa_i, \tau_i}  |\nabla \theta_{\varepsilon}|_{L^{4}(\Omega)}^4  \label{4-20} \\
& + \frac{2}{\kappa_i}  C_*^2 C_R^2 |\Omega| \mbox{ a.e.  on } [0,T] \mbox{ for } i.  \nonumber 
\end{align} 

Furthermore, by adding (\ref{4-13}) and  (\ref{4-14}), and applying  (\ref{4-20}) and (\ref{LU}) 
we see that 
\begin{align}
& \frac{d}{dt} \sum_{i=1}^N (\frac{1}{2} + \frac{\kappa_i}{2}) |\nabla u_{i\varepsilon}|_{L^{2}(\Omega)}^2
 + \frac{1}{2}  |u_{\varepsilon t}|_{L^2(\Omega)}^2
+ \sum_{i=1}^N  \frac{\kappa_i}{2}  |\Delta u_{i\varepsilon}|_{L^{2}(\Omega)}^2 \nonumber \\
\leq & 2\eta \sum_{i=1}^N  |\nabla u_{i \varepsilon}|_{L^4(\Omega)}^4 
+ (C_{\eta}  c_2^4 \tau_i^4 +  \frac{\tau^4 c_2^2 C_{\eta}}{\kappa_i^2}) 
          |\nabla \theta_{\varepsilon}|_{L^4(\Omega)}^4 
+  (C_R^2 + \frac{C_R^2}{\kappa_i} ) |\Omega|   \nonumber \\
\leq &  2\eta \sum_{i=1}^N (\frac{C_*^2}{\kappa_i}  |u_{i\varepsilon t}|_{L^{2}(\Omega)}^2 
+  36 \kappa_i C_*^2 C_{\Omega}^2 |u_{i\varepsilon}|_{H^2(\Omega)}^2
+ c_4 C_*^4 C_{\kappa_i, \tau_i}  |\nabla \theta_{\varepsilon}|_{L^{4}(\Omega)}^4
+ \frac{2}{\kappa_i}  C_*^2 C_R^2 |\Omega|)  \nonumber \\
& + (C_{\eta}  c_2^4 \tau_i^4 +  \frac{\tau^4 c_2^2 C_{\eta}}{\kappa_i^2}) 
          |\nabla \theta_{\varepsilon}|_{L^4(\Omega)}^4 
+  (C_R^2 + \frac{C_R^2}{\kappa_i} ) |\Omega|  \quad \mbox{ a.e.  on }  [0,T]. \nonumber 
\end{align} 
Since we can take sufficiently small $\eta$ in the above inequality, there exists a positive constant $C_2$ independent of $\varepsilon \in (0,1]$ such that 
\begin{align}
& \frac{d}{dt} \sum_{i=1}^N (\frac{1}{2} + \frac{\kappa_i}{2}) |\nabla u_{i\varepsilon}|_{L^{2}(\Omega)}^2
 +  \frac{1}{2}  |u_{\varepsilon t}|_{L^2(\Omega)}^2
+   \frac{\mu}{4} \sum_{i=1}^N   |\Delta u_{i\varepsilon}|_{L^{2}(\Omega)}^2 \nonumber \\
\leq & C_2(   |\nabla \theta_{\varepsilon}|_{L^4(\Omega)}^4  +1)  \quad \mbox{ a.e.  on }  [0,T]. 
\end{align} 
where $\mu = \min\{\kappa_i| i = 1, 2, \cdots, N\}$. 
Therefore,  the assertion of this lemma is a direct consequence of  Lemma \ref{lem4-4}. 
\end{proof}

\section{Proofs of Theorems \ref{th1} and \ref{th2}} \label{final}

\begin{proof}[Proof of Theorem \ref{th1}]
Let $\delta_0 > 0$ be fixed and for $\varepsilon > 0$ $\{\theta^{(\varepsilon)}, u^{(\varepsilon)} \}$ be a solution of P$_{\varepsilon}$ on $[0,T]$. By Lemmas \ref{lem4-1} $\sim$ \ref{lem4-5} the sets 
$\{\theta^{(\varepsilon)}| \varepsilon \in (0,1] \}$ and $\{u_i^{(\varepsilon)}| \varepsilon \in (0,1] \}$ 
($ i= 1, \cdots, N)$ are bounded in 
$L^{\infty}(Q(T))$, $W^{1,2}(0,T; L^2(\Omega))$, $L^{\infty}(0,T; H^1(\Omega))$, 
$L^2(0,T; H^2(\Omega))$ and $L^4(0,T; W^{1,4}(\Omega))$. 
Then, there exists a subsequence $\{\varepsilon_j\}$ such that 
$\theta^{(j)} := \theta^{(\varepsilon_j)} \to \theta$ and $u^{(j)}_i := u_i^{(\varepsilon_j)} \to u_i$ 
in $L^2(0,T; L^2(\Omega))$, weakly in $W^{1,2}(0,T; L^2(\Omega))$, 
$L^2(0,T; H^2(\Omega))$ and in $L^4(0,T; W^{1,4}(\Omega))$, and 
weakly* in $L^{\infty}(Q(T))$ and $L^{\infty}(0,T; H^1(\Omega))$ as $j \to \infty$, 
where $\theta, u_i \in X(T) \cap  L^4(0,T; W^{1,4}(\Omega))$ for $i = 1,2, \cdots, N$.    

First, we show that 
\begin{equation}
\nabla^{\delta_0} u_i^{(j)} \cdot \nabla \theta^{(j)} \to 
\nabla^{\delta_0} u_i \cdot \nabla \theta \mbox{ weakly in  } L^2(Q(T)) \mbox{ as } j \to \infty
  \mbox{ for each }i. 
\label{5.1} 
\end{equation}
In fact, for each $i$ and  any $\eta \in L^2(Q(T))$ we have 
\begin{align*}
 & |\int_{Q(T))} (\nabla^{\delta_0} u_i^{(j)} \cdot \nabla \theta^{(j)} - 
\nabla^{\delta_0} u_i \cdot \nabla \theta) \eta dx dt| \\
\leq &  |\int_{Q(T))} (\nabla^{\delta_0} u_i^{(j)}  -  \nabla^{\delta_0} u_i) \cdot \nabla \theta^{(j)}  \eta dx dt| 
+ |\int_{Q(T))} \nabla^{\delta_0} u_i  \cdot (\nabla \theta^{(j)} - \nabla \theta) \eta dx dt| \\
=: & I_{1j} + I_{j2}  \quad \mbox{ for each } j. 
\end{align*} 
Since $\nabla^{\delta_0} u_i \in L^{\infty}(Q(T))^3$, namely, $\eta \nabla^{\delta_0} u_i  \in L^2(Q(T))^3$, 
it is easy to see that $I_{2j} \to 0$ as $j \to \infty$.  Also, by (\ref{moli1}) we have 
\begin{align*} 
I_{1j} \leq & (\int_0^T |\nabla^{\delta_0} (u_i^{(j)}  -  u_i) |_{L^4(\Omega)}^4 dt)^{1/4} 
  |\nabla \theta^{(j)}|_{L^4(Q(T))} |\eta|_{L^2(Q(T))}\\
 \leq &  c_{4,\delta_0} (\int_0^T |u_i^{(j)}  -  u_i |_{L^2(\Omega)}^4 dt)^{1/4} 
|\nabla \theta^{(j)}|_{L^4(Q(T))} |\eta|_{L^2(Q(T))}  
 \to 0 \mbox{ as } j \to \infty. 
\end{align*}
Thus  (\ref{5.1}) holds. 

As a next step, we prove 
\begin{equation}
\nabla \theta^{(j)} \to \nabla \theta \mbox{ in  } L^2(Q(T))^3 \mbox{ as } j \to \infty. 
\label{5.10} 
\end{equation}
Indeed, for $j$ it is holds that 
\begin{equation} 
\theta^{(j)}_t - \kappa \Delta \theta^{(j)} 
 - \tau \sum_{i=1}^N \nabla^{\delta_0} \hat{u}_i^{(j)} \cdot \nabla \theta^{(j)}  =0 
                   \mbox{ in } Q(T).  \nonumber 
\end{equation}
The convergences as above and (\ref{5.1}) imply that 
\begin{equation} 
\theta_t - \kappa \Delta \theta 
 - \tau \sum_{i=1}^N \nabla^{\delta_0} u_i  \cdot \nabla \theta  =0 
                   \mbox{ in } Q(T).  \nonumber  
\end{equation}
Accordingly,  we see that    
\begin{equation}
  (\theta^{(j)} - \theta)_t - \kappa \Delta (\theta^{(j)} - \theta)  
 =   \tau \sum_{i=1}^N (\nabla^{\delta_0} u_i^{(j)} \cdot \nabla \theta^{(j)}  - 
        \nabla^{\delta_0} u_i \cdot \nabla \theta)  
                   \mbox{ in } Q(T) \mbox{ for } j.  \label{5.3}
\end{equation}
and multiply (\ref{5.3}) by $\theta^{(j)} - \theta$. Then we obtain 
\begin{align*}
& \frac{1}{2} \frac{d}{dt} |\theta^{(j)} - \theta|_{L^2(\Omega)}^2 
  + \kappa |\nabla (\theta^{(j)} - \theta)|_{L^2(\Omega)}^2 \\
= &  \tau \sum_{i=1}^N \int_{\Omega}  (\nabla^{\delta_0} u_i^{(j)} \cdot \nabla \theta^{(j)}  - 
        \nabla^{\delta_0} u_i \cdot \nabla \theta)  (\theta^{(j)} - \theta) dx
   \mbox{ a.e on } [0,T] \mbox{ for } j  
\end{align*}
and 
\begin{align*}
& \frac{1}{2}  |\theta^{(j)}(T)  - \theta(T)|_{L^2(\Omega)}^2 
  + \kappa \int_0^T |\nabla (\theta^{(j)} - \theta)|_{L^2(\Omega)}^2 dt \\
= &  \tau \sum_{i=1}^N \int_{Q(T)}  (\nabla^{\delta_0} u_i^{(j)} \cdot \nabla \theta^{(j)}  - 
        \nabla^{\delta_0} u_i \cdot \nabla \theta) 
              (\theta^{(j)} - \theta) dx dt   \\
\leq & \tau \sum_{i=1}^N ( |\nabla^{\delta_0} u_i^{(j)}|_{L^4(Q(T))} |\nabla \theta^{(j)}|_{L^4(Q(T))}
 +  |\nabla^{\delta_0} u_i|_{L^4(Q(T))} |\nabla \theta|_{L^4(Q(T))}) 
          |\theta^{(j)} - \theta|_{L^2(Q(T))}  \mbox{ for } j.   
\end{align*}
From this inequality it follows (\ref{5.10}).

The third step of this proof is to get 
\begin{equation}
\nabla^{\varepsilon_j} \theta^{(j)} \cdot \nabla u_i^{(j)} \to \nabla \theta\cdot \nabla u_i
\mbox{ weakly in } L^2(Q(T)) \mbox{ as } j \to \infty \mbox{ for each } i. 
\label{5.4}
\end{equation}
On account of the boundedness for approximate solutions it is sufficient to show 
that 
$$
 \int_{Q(T)} ( \nabla^{\varepsilon_j} \theta^{(j)} \cdot \nabla u_i^{(j)} - \nabla \theta\cdot \nabla u_i) \xi dx dt \to 0 \mbox{ as } j \to \infty 
\mbox{ for } \xi  \in  C^{\infty}(\overline{Q(T)}). $$
Then for  $\xi \in C^{\infty}(\overline{Q(T)})$ and each $j$ it holds that 
\begin{align*}
& |\int_{Q(T)} ( \nabla^{\varepsilon_j} \theta^{(j)} \cdot \nabla u_i^{(j)} - \nabla \theta\cdot \nabla u_i)
 \xi dx dt| \\
\leq & 
 |\int_{Q(T)} ( J_{\varepsilon_j} \ast  (\nabla \theta^{(j)} -  \nabla \theta)  \cdot \nabla u_i^{(j)} \xi dx dt| 
+  |\int_{Q(T)} ( J_{\varepsilon_j} \ast \nabla  \theta  -  \nabla \theta)  \cdot \nabla u_i^{(j)} \xi dx dt| 
\\
& +  |\int_{Q(T)}  \nabla \theta \cdot ( \nabla u_i^{(j)} -  \nabla u_i)  \xi dx dt| 
=:  \hat{I}_{1j} + \hat{I}_{2j} + \hat{I}_{3j}. 
\end{align*} 
Immediately,  for each $j$ we have 
\begin{align*}
\hat{I}_{3j} = & \int_{Q(T)}  \nabla \theta \cdot ( \nabla u_i^{(j)} -  \nabla u_i) \xi dx dt \to 0
\mbox{ as } j \to \infty, 
\end{align*}
since $\xi (\nabla \theta)  \in L^2(Q(T))^3$ and $ \nabla u_i^{(j)} \to  \nabla u_i$ weakly in 
$L^2(0,T; Q(T))^3$. Also, because $\nabla \theta \in L^4(Q(T))^3$,  it is easy to see 
$$ \hat{I}_{2j} \leq  |J_{\varepsilon_j} \ast \nabla \theta  -  \nabla \theta|_{L^4(Q(T))}
        |\nabla u_i^{(j)}|_{L^4(Q(T))} |\xi|_{L^2(Q(T))} \to 0 \mbox{ as } j \to \infty. 
$$ 
Here, by (\ref{moli3}) we infer that  
\begin{align*} 
\hat{I}_{1j} \leq & 
 |J_{\varepsilon_j} \ast  (\nabla \theta^{(j)} -  \nabla \theta)|_{L^2(Q(T))} 
    |\nabla u_i^{(j)}|_{L^4(Q(T))}  |\xi|_{L^4(Q(T))} \\
 \leq & 
 |\nabla \theta^{(j)} -  \nabla \theta|_{L^2(Q(T))} 
    |\nabla u_i^{(j)}|_{L^4(Q(T))}  |\xi|_{L^4(Q(T))} \to 0 \mbox{ as } j \to \infty. 
\end{align*} 
The above arguments guarantee (\ref{5.4}). 

From (\ref{5.4}) it follows 
$$ u_{it} - \kappa_i \Delta u_i - \tau_i \nabla \theta \cdot \nabla u_i = R_i(u) 
 \quad \mbox{ a.e. on } Q(T) \mbox{ for } i. 
$$
Therefore. $\{\theta, u\}$ is a solution of $P$ on $[0,T]$ for any $T  > 0$ and 
then is also a solution of P on $[0,\infty)$. 

(\ref{assert2})  is a direct consequence of Lemma \ref{lem4-2}. 
Thus we have proved Theorem \ref{th1}. 
\end{proof}

\begin{proof}[Proof of Theorem \ref{th2}]
First, we shall prove (1). 
By Theorem \ref{th1} there exists a solution $\{\theta, u\}$ of P on $[0,T]$. 
Since $\theta_0 \in W^{1,\infty}(\Omega)$ and 
 $\nabla^{\delta_0}u_i \in L^{\infty}(Q(T))^3$ in (\ref{eq1}) for $i$, where 
$u = (u_1, \cdots, u_N)$, 
the classical theory for linear parabolic equations, for instance \cite{LSU}, guarantees that 
$\nabla \theta \in L^{\infty}(Q(T))^3$. Similarly, we can show that 
$\nabla u_i  \in L^{\infty}(Q(T))^3$ for $i$, because $R_i(u) \in L^{\infty}(Q(T))$. 

Next,  let $\{\theta^{(k)}, u^{(k)}\}$ be a solution of P for $k = 1, 2$ satisfying 
$\theta^{(k)} \in L^{\infty}(0,T; W^{1,\infty}(\Omega))$ and 
$u_i^{(k)} \in L^{\infty}(0,T; W^{1,\infty}(\Omega))$, $k = 1, 2$ and each $i$. 
We put $\theta = \theta^{(1)} - \theta^{(2)}$,  $u = u^{(1)} - u^{(2)}$,
$u^{(k)} = (u_1^{(k)}, \cdots, u_N^{(k)})$, $k = 1, 2$. 
Since $u^{(k)} \in L^{\infty}(Q(T))^N$ for $k$, we can take a positive constant 
$C_R'$ such that 
$$ |R_i(u^{(1)})  - R_i(u^{(2)})| \leq C_R' |u| \mbox{ a.e. on } Q(T). $$

First, it is  straightforward to see that 
\begin{align}
& \theta_t - \kappa \Delta \theta = \tau \sum_{i=1}^N
    ( \nabla^{\delta_0} u_i^{(1)} \cdot \nabla \theta^{(1)} 
       -  \nabla^{\delta_0} u_i^{(2)} \cdot \nabla \theta^{(2)}) \quad \mbox{ in } Q(T), \label{5.5}
\\
& u_{it} - \kappa_i \Delta u_i = \tau_i ( \nabla u_i^{(1)} \cdot \nabla \theta^{(1)} 
       -  \nabla u_i^{(2)} \cdot \nabla \theta^{(2)} )
    + R_i(u^{(1)})  - R_i(u^{(2)})  \quad \mbox{ in } Q(T). \label{5.6}
 \end{align}
We multiply (\ref{5.5}) by $\theta$ and integrate it over $\Omega$. Then we see that 
\begin{align}
& \frac{1}{2} \frac{d}{dt} |\theta|_{L^2(\Omega)}^2 + \kappa |\nabla \theta|_{L^2(\Omega)}^2 
        \nonumber \\
= & \tau \sum_{i=1}^N \int_{\Omega}
     ( \nabla^{\delta_0} u_i  \cdot \nabla \theta^{(1)} 
      +  \nabla^{\delta_0} u_i^{(2)} \cdot \nabla \theta) \theta dx  \nonumber \\
\leq &  \tau_i \sum_{i=1}^N (
        |\nabla \theta^{(1)}|_{L^{\infty}(Q(T))} |\nabla^{\delta_0} u_i|_{L^2(\Omega)} |\theta|_{L^2(\Omega)}
  +   |\nabla^{\delta_0} u_i^{(2)}|_{L^{\infty}(Q(T))} |\nabla \theta|_{L^2(\Omega)} |\theta|_{L^2(\Omega)}) 
 \nonumber \\
\leq &  C (\sum_{i=1}^N  |u_i|_{L^2(\Omega)}^2 +  |\theta|_{L^2(\Omega)}^2)
   + \frac{\kappa}{4} |\nabla \theta|_{L^2(\Omega)}^2 \mbox{ a.e. on } [0,T],   \label{5.7} 
\end{align}
where $C$ is a positive constant. 

Similarly, from (\ref{5.6}) we observe that 
\begin{align}
& \frac{1}{2} \frac{d}{dt} |u_i|_{L^2(\Omega)}^2 + \kappa_i |\nabla u_i|_{L^2(\Omega)}^2 
        \nonumber \\
= & \tau_i \int_{\Omega}
     ( \nabla u_i  \cdot \nabla \theta^{(1)} 
      +  \nabla u_i^{(2)} \cdot \nabla \theta) u_i dx  
     + \int_{\Omega} (R_i(u^{(1)}) -  R_i(u^{(2)}) )  u_i dx 
\nonumber \\
\leq & \tau_i  
     (|\nabla \theta^{(1)}|_{L^{\infty}(Q(T))} |\nabla u_i|_{L^2(\Omega)} 
      +  |\nabla u_i^{(2)}|_{L^{\infty}(Q(T))}  |\nabla \theta|_{L^2(\Omega)} 
     +  C_R' |u|_{L^2(\Omega)} )  |u_i|_{L^2(\Omega)}   
\nonumber \\
\leq &  \frac{\kappa}{4}  |\nabla \theta|_{L^2(\Omega)}^2
+ \frac{\kappa_i}{2}  |\nabla u_i|_{L^2(\Omega)}^2  
+  C'( |u_i|_{L^2(\Omega)}^2 +  |u|_{L^2(\Omega)}^2) 
\mbox{ a.e. on } [0,T] \mbox{ for }  i,   \label{5.8} 
\end{align}
where $C'$ is a positive constant. 

By adding (\ref{5.7}) and (\ref{5.8}), we infer that 
\begin{align*}
& \frac{1}{2} \frac{d}{dt}( |\theta|_{L^2(\Omega)}^2 +  |u|_{L^2(\Omega)}^2) 
+ \kappa |\nabla \theta|_{L^2(\Omega)}^2 
+ \sum_{i=1}^N  \kappa_i |\nabla u_i|_{L^2(\Omega)}^2 \\
\leq & \hat{C}  (|\theta|_{L^2(\Omega)}^2 +  |u|_{L^2(\Omega)}^2) \mbox{ a.e. on } [0,T],  
\end{align*}
where $\hat{C}$ is a positive constant. 
Therefore,  by applying Gronwall's inequality  to the above inequality, 
we conclude that $\theta = 0$ and $u = 0$ a.e on $Q(T)$. 
\end{proof} 

\bibliographystyle{plain}

\end{document}